\def\final{1}
\titleformat{\subsection}[runin]
{\normalfont\normalsize\bfseries\filcenter}{\thesubsection.}{1 ex}{}
\newcommand{\mynote}[1]{\marginpar{\tiny\sf #1}}
\newcommand{\mynote}[1]{}
\newcommand{\figref}[1]{Figure \ref{fig:#1}}
\newcommand{\lemref}[1]{Lemma \ref{lemma:#1}}
\newcommand{\propref}[1]{Proposition \ref{prop:#1}}
\newcommand{\theoref}[1]{Theorem \ref{theo:#1}}
\newcommand{\secref}[1]{Section \ref{sec:#1}}
\renewcommand{\eqref}[1]{(\ref{eq:#1})}
\newcommand{\lemlab}[1]{\label{lemma:#1}}
\newcommand{\proplab}[1]{\label{prop:#1}}
\newcommand{\theolab}[1]{\label{theo:#1}}
\newcommand{\seclab}[1]{\label{sec:#1}}
\newcommand{\eqlab}[1]{\label{eq:#1}}
\renewcommand{\vec}[1]{\mathbf{#1}}
\newcommand{\iprod}[2]{\left\langle {#1},{#2}\right\rangle}
\newcommand{\bgamma}{\bm{\gamma}}
\newcommand{\Euc}{\operatorname{Euc}}
\newcommand{\eop}{\hfill$\qed$}
\newcommand{\R}{\mathbb{R}}
\newcommand{\Z}{\mathbb{Z}}
\DeclareMathOperator{\HH}{H}
\begin{document}
\title{Generic rigidity of reflection frameworks}
\author{Justin Malestein\thanks{Temple University, \url{justmale@temple.edu}}
\and Louis Theran\thanks{Institut für Mathematik,
Diskrete Geometrie, Freie Universität Berlin, \url{theran@math.fu-berlin.de}}}
\date{}
\maketitle
\begin{abstract}
\begin{normalsize}
We give a combinatorial characterization of generic minimally rigid
reflection frameworks.  The main new idea is to study a pair of direction networks on
the same graph such that one admits faithful realizations and the other has only
collapsed realizations.  In terms of infinitesimal rigidity, realizations of the
former produce a framework and the latter certifies that this framework
is infinitesimally rigid.
\end{normalsize}
\end{abstract}

\section{Introduction} \seclab{intro}
A \emph{reflection framework} is a planar structure made of \emph{fixed-length bars} connected by
\emph{universal joints} with full rotational freedom.  Additionally, the bars and joints  are symmetric with
respect to a reflection through a fixed axis.  The allowed motions preserve the \emph{length} and
\emph{connectivity} of the bars and \emph{symmetry} with respect to some reflection.
This model is very similar to that of \emph{cone frameworks} that we introduced in \cite{MT11}; the
difference is that the symmetry group $\Z/2\Z$ acts on the plane by reflection instead of rotation
through angle $\pi$.

When all the allowed motions are Euclidean isometries, a reflection framework is \emph{rigid} and otherwise it is
\emph{flexible}.  In this paper, we give a \emph{combinatorial} characterization of minimally rigid, generic
reflection frameworks.

\subsection{The algebraic setup and combinatorial model}
Formally a reflection framework is given by a triple $(\tilde{G},\varphi,\tilde{\bm{\ell}})$,
where $\tilde{G}$ is a finite graph, $\varphi$ is a $\Z/2\Z$-action on $\tilde{G}$ that is
free on the vertices and edges, and $\tilde{\bm{\ell}} = (\ell_{ij})_{ij\in E(\tilde{G})}$
is a vector of non-negative \emph{edge lengths} assigned to the edges of $\tilde{G}$.  A
\emph{realization} $\tilde{G}(\vec p,\Phi)$ is an assignment of points $\vec p = (\vec p_i)_{i\in V(\tilde{G})}$
and a representation of $\Z/2\Z$ by a reflection $\Phi\in \Euc(2)$ such that:
\begin{eqnarray}\eqlab{lengths-1}
||\vec p_j - \vec p_i||^2 = \ell_{ij}^2 & \qquad \text{for all edges $ij\in E(\tilde{G})$} \\
\eqlab{lengths-2}
\vec p_{\varphi(\gamma)\cdot i} = \Phi(\gamma)\cdot\vec p_i & \qquad
\text{for all $\gamma\in \Z/2\Z$ and $i\in V(\tilde{G})$}
\end{eqnarray}
The set of all realizations is defined to be the \emph{realization space}
$\mathcal{R}(\tilde{G},\varphi,\bm{\ell})$ and its quotient by the Euclidean isometries
$\mathcal{C}(\tilde{G},\varphi,\bm{\ell}) = \mathcal{R}(\tilde{G},\varphi,\bm{\ell})/\Euc(2)$
to be the configuration space. A realization is \emph{rigid} if it is isolated in the
configuration space and otherwise \emph{flexible}.

As the combinatorial model for reflection frameworks it will be more convenient
to use colored graphs.  A \emph{colored graph} $(G,\bgamma)$ is a finite,
directed%
\footnote{For the group $\Z/2\Z$, the orientation of the edges do not play a role, but we give the standard
definition for consistency.} %
graph $G$, with an assignment $\bgamma = (\gamma_{ij})_{ij\in E(G)}$
of an element of a group $\Gamma$ to each edge.  In this paper $\Gamma$ is always
$\Z/2\Z$. There is a standard dictionary
\cite[Section 9]{MT11} associating $(\tilde{G},\varphi)$ with a colored
graph $(G,\bgamma)$: $G$ is the quotient of $\tilde{G}$ by $\Gamma$, and the
colors encode the covering map via a natural map $\rho : \pi_1(G,b) \to \Gamma$.
In this setting, the choice of base vertex does not matter, and indeed, we may
define $\rho : \HH_1(G, \Z)\to \Z/2\Z$ and obtain the same theory.

\subsection{Main Theorem}
We can now state the main result of this paper.
\begin{theorem}[\reflectionlaman]\theolab{reflection-laman}
A generic reflection framework is minimally rigid if and only if its associated colored
graph is reflection-Laman.
\end{theorem}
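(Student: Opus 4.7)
The plan is to split the equivalence into necessity and sufficiency, with the bulk of the work going into sufficiency via the direction-network paradigm advertised in the abstract.

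For the necessity direction, I would first derive the form of the infinitesimal rigidity matrix of a reflection framework, whose rows are indexed by edges of the quotient colored graph $(G,\bgamma)$ and whose columns are indexed by the vertices in a fundamental domain together with the parameters describing the reflection axis $\Phi$. Minimal rigidity forces this matrix to have full row rank with no redundant rows. A Maxwell-style count then gives the global equality $|E(G)| = 2|V(G)| - 1$ (the $-1$ rather than $-3$ accounting for the fact that only translations along the axis survive as trivial motions in the symmetric setting), while restricting to subgraphs and controlling the dimension of the space of trivial symmetric motions on each subgraph gives the hereditary sparsity inequalities that together define the reflection-Laman condition.

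For sufficiency, I would proceed in three steps. First, set up $\Z/2\Z$-colored direction networks on $(G,\bgamma)$: each edge receives a prescribed direction, and trivially colored edges must be realized with that direction while non-trivially colored edges are realized with the reflected direction. Second, show that for a generic choice of directions on a reflection-Laman graph there is a \emph{faithful} realization (no coincident vertices) unique up to translation along the mirror; this produces a point configuration $\vec p$ that we use as the positions of the candidate framework, with edge lengths $\elltilde$ read off from $\vec p$. Third, construct a second direction network on the same underlying colored graph, chosen so that its only realizations are \emph{collapsed} to a point on the axis. The key observation is that the linear system cutting out collapsed realizations is, after the appropriate identification of rows and columns, the transpose of the infinitesimal rigidity matrix of $\Gal$ at $\vec p$; hence the framework has no nontrivial symmetric infinitesimal motions. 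Combining steps two and three yields infinitesimal, and hence minimal, rigidity at the specific realization $\vec p$, and genericity follows by openness of maximum matrix rank together with a Zariski-density argument transferring rank bounds from generic directions to generic lengths.

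The main obstacle is the third step: producing, from a reflection-Laman graph, a direction network whose only realizations are collapsed. The natural route is a combinatorial decomposition lemma that extracts from the sparsity count a certificate — for example, a partition of $E(G)$ into edge sets each supporting its own ``collapsing'' sub-realization, in the spirit of a matroid-union or Hall-type argument — where the coloring controls whether each subsystem is forced onto the mirror or merely constrained parallel to it. The rotational analogue carried out for cone frameworks in \cite{MT11} is the template, but the reflection case differs in that the group fixes a one-dimensional subspace rather than a point, so the space of trivial motions is one-dimensional instead of zero-dimensional, and the bookkeeping for both the sparsity count and the collapsed-realization certificate must be adjusted accordingly. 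Once that combinatorial lemma is in place, the remaining arguments are genericity and rank-transfer, which I expect to be routine.
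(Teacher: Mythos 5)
Your overall architecture (a faithful realization of one direction network to produce the points, collapse of a second to certify corank one, and rank transfer via the transpose identification) matches the paper's, and your necessity argument is fine in spirit. But step two of your sufficiency plan is false, and the error is precisely the central difficulty of the reflection case. For a reflection-Laman graph $(G,\bgamma)$, a \emph{generic} choice of directions does \emph{not} admit a faithful realization: every reflection-Laman graph is a reflection-$(2,2)$ graph, and \propref{reflection-22-collapse} shows that a generic direction network on a reflection-$(2,2)$ graph has \emph{only collapsed} realizations. This is the failure of the ``parallel redrawing trick'' flagged in the introduction: turning all directions by $\pi/2$ is not conjugate to an isometry commuting with the fixed reflection, so the ranks of $(\tilde G,\varphi,\vec d)$ and $(\tilde G,\varphi,\vec d^\perp)$ genuinely differ, and on a graph with $2n-1$ edges the generic rank is already full, leaving only the collapsed solutions. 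Consequently the directions arising as edge vectors of any faithful symmetric point configuration are \emph{always non-generic}, and your plan to read $\vec p$ off from generic directions cannot start.

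The second, related, gap is that the collapsing network in your step three is not freely choosable. By \propref{rigidity-vs-directions}, the certificate that \eqref{colored-inf} has corank one at $\vec p$ is the collapse of the direction network whose directions are exactly the $\pi/2$-rotations of the edge vectors of $\vec p$, i.e.\ the perpendiculars of the (necessarily non-generic) directions realized in step two; an independently constructed collapsing network on the same graph proves nothing about $\vec p$. So the real work is to produce a \emph{single} assignment $\vec d$ such that $(\tilde G,\varphi,\vec d)$ has faithful realizations \emph{and} $(\tilde G,\varphi,\vec d^\perp)$ has only collapsed ones --- the paper's notion of a special pair. The paper builds this by passing to Ross graphs ($2n-2$ edges, where generic directions \emph{do} give strongly faithful realizations), locating the vertex-disjoint Ross-circuits, assigning on each circuit deliberately degenerate directions (tree edges in a fixed direction $\vec v$, reflection-$(1,1)$ edges horizontal, exploiting that horizontal directions are preserved by the reflection so the two crossing lines of the collapse degenerate to one), and then running a perturbation argument to recover faithfulness while keeping $\vec d^\perp$ at full rank. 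Your ``main obstacle'' (the collapsing decomposition) is the easy half and is essentially \lemref{reflection-22-decomp}; the genericity and rank-transfer steps you call routine are where the actual content of the proof lies.
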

The \emph{reflection-Laman graphs}
appearing in the statement are defined in \secref{matroid}.  Genericity has
its standard meaning from algebraic geometry: the set of non-generic reflection
frameworks is a measure-zero algebraic set, and a small \emph{geometric} perturbation
of a non-generic reflection framework yields a generic one.

\subsection{Infinitesimal rigidity and direction networks}
As in all known proofs of ``Maxwell-Laman-type'' theorems such as \theoref{reflection-laman},
we give a combinatorial characterization of a linearization of the problem known as
\emph{infinitesimal rigidity}.  To do this, we use a \emph{direction network} method
(cf. \cite{W88,ST10,MT10,MT11}).  A \emph{reflection direction network} $(\tilde{G},\varphi,\vec d)$ is a
symmetric graph, along with an assignment of a \emph{direction} $\vec d_{ij}$ to each edge.  The
\emph{realization space} of a direction network is the set of solutions $\tilde{G}(\vec p)$ to the system of
equations:
\begin{eqnarray}
\eqlab{dn-realization1}
\iprod{\vec p_j - \vec p_i}{\vec d^{\perp}_{ij}} = 0 & \qquad \text{for all edges $ij\in E(\tilde{G})$} \\
\eqlab{dn-realization2}
\vec p_{\varphi(\gamma)\cdot i} = \Phi(\gamma)\cdot\vec p_i & \qquad
\text{for all $\gamma\in \Z/2\Z$ and $i\in V(\tilde{G})$}
\end{eqnarray}
where the $\Z/2\Z$-action $\Phi$ on the plane is by reflection through the $y$-axis.
A reflection direction network is
determined by assigning a direction to each edge of the colored quotient graph $(G,\bgamma)$
of $(\tilde{G},\varphi)$ (cf. \cite[Lemma 17.2]{MT11}).
Since all the direction networks in this paper are
reflection direction networks, we will refer to them simply as ``direction networks''
to keep the terminology manageable.  A realization of a direction network is
\emph{faithful} if none of the edges of its graph have coincident endpoints and
\emph{collapsed} if all the endpoints are coincident.

A basic fact in the theory of finite planar frameworks \cite{W88,ST10,DMR07}
is that, if a direction network has faithful realizations, the dimension of the realization
space is equal to that of the space of infinitesimal motions of a generic framework with the same
underlying graph.  In \cite{MT10,MT11}, we adapted this idea to the symmetric case when all the
symmetries act by rotations and translations.

As discussed in \cite[Section 1.8]{MT11}, this so-called ``parallel redrawing trick''%
\footnote{This terminology comes from the engineering community, in which the basic idea has been folklore for
quite some time.} %
described above does \emph{not} apply verbatim to reflection frameworks.
Thus, we rely on the somewhat technical (cf. \cite[Theorem B]{MT10},
\cite[Theorem 2]{MT11}) \theoref{direction-network}, which we state after giving an important
definition.

Let $(\tilde{G},\varphi,\vec d)$ be a direction network and define $(\tilde{G},\varphi,\vec d^{\perp})$
to be the direction network with $(\vec d^\perp)_{ij} = (\vec d_{ij})^\perp$.  These two direction
networks form a \emph{special pair} if:
\begin{itemize}
\item $(\tilde{G},\varphi,\vec d)$ has a faithful realization.
\item $(\tilde{G},\varphi,\vec d^\perp)$ has only collapsed realizations.
\end{itemize}
\begin{theorem}[\linkeddirectionnetworks]\theolab{direction-network}
Let $(G,\bgamma)$ be a colored graph with $n$ vertices, $2n-1$ edges, and lift $(\tilde{G},\varphi)$.
Then there are directions $\vec d$ such that the direction networks $(\tilde{G},\varphi,\vec d)$
and $(\tilde{G},\varphi,\vec d^\perp)$ are a special pair if and only if $(G,\bgamma)$ is reflection-Laman.
\end{theorem}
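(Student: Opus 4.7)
My plan is to prove the two implications separately, using rank and kernel arguments for the equivariant direction-network matrix, combined with an inductive construction along a Henneberg-type decomposition of reflection-Laman graphs.

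For the necessity direction, assume directions $\vec d$ make $(\tilde{G},\varphi,\vec d)$ and $(\tilde{G},\varphi,\vec d^\perp)$ a special pair. The $\Z/2\Z$-equivariant system coming from \eqref{dn-realization1}--\eqref{dn-realization2} is a $(2n-1)\times 2n$ linear system in the positions of a fundamental domain. Its kernel always contains the 1-parameter family of collapsed realizations on the reflection axis $\{x = 0\}$, because any such collapsed configuration trivially satisfies every edge equation and is fixed by $\Phi$. The hypothesis that $(\tilde{G},\varphi,\vec d^\perp)$ has only collapsed realizations therefore forces the associated matrix $M(\vec d^\perp)$ to have full rank $2n-1$. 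I would then argue by contrapositive: if $(G,\bgamma)$ violates the reflection-Laman count on some subgraph $H$, then localizing the system to $H$ and using the $\rho$-type of $H$ (whether the $\bgamma$-image of a cycle basis is trivial in $\Z/2\Z$) exhibits an extra kernel element, contradicting full rank. The different count requirements for $\rho$-trivial versus $\rho$-nontrivial subgraphs should reflect which trivial infinitesimal motions are compatible with the symmetry of $H$.

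For the sufficiency direction, I would construct the special pair by induction along a Henneberg-style inductive decomposition of reflection-Laman graphs into small base cases, to be verified by direct computation. At each inductive step a new vertex (and its colored incident edges) is added, leaving freedom in choosing the directions on the new edges. Each of the two defining conditions of a special pair is an \emph{open} condition on directions: full rank of $M(\vec d^\perp)$ by continuity of minors, and faithfulness of a realization of $(\tilde{G},\varphi,\vec d)$ because non-coincidence of edge endpoints is preserved under small perturbation. So as long as each open condition remains nonempty after the inductive step, a generic choice of directions on the new edges preserves the special pair.

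The main obstacle is that the two conditions pull in opposite directions: the $\vec d^\perp$ network must be \emph{maximally constrained} (full rank), while the $\vec d$ network must be \emph{slightly underconstrained} (a 2-dimensional kernel containing a faithful realization beyond the collapsed line). For generic $\vec d$ both matrices typically have the same rank $2n-1$, so one must work on a codimension-one locus where $M(\vec d)$ is singular. The crux of the argument is showing that for reflection-Laman $(G,\bgamma)$ this locus is nonempty, meets the open set where $M(\vec d^\perp)$ retains full rank, and carries faithful kernel vectors for $M(\vec d)$. This is exactly where the combinatorics of reflection-Laman graphs---and in particular the role of edges whose color is the nontrivial element of $\Z/2\Z$---must be harnessed, and where I expect the most careful bookkeeping (tracking how each Henneberg-type move affects both $M(\vec d)$ and $M(\vec d^\perp)$) will be required.
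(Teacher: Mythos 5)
There is a genuine gap. Your proposal correctly isolates the central difficulty --- the directions $\vec d$ must lie on a non-generic locus where $M(\vec d)$ is rank-deficient with a faithful kernel vector while $M(\vec d^\perp)$ stays full rank --- but it does not resolve it, and the two tools you propose for the sufficiency direction both fail as stated. First, the claim in your second paragraph that faithfulness of $(\tilde{G},\varphi,\vec d)$ is an open condition on $\vec d$ is false, and your own third paragraph implicitly contradicts it: every reflection-Laman graph is reflection-$(2,2)$ (\propref{reflection-laman-vs-reflection-22}), so by \propref{reflection-22-collapse} a \emph{generic} direction assignment has only collapsed realizations; an arbitrarily small perturbation of directions admitting a faithful realization can therefore raise the rank of $M(\vec d)$ and destroy all faithful realizations. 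Hence ``a generic choice of directions on the new edges preserves the special pair'' cannot be right --- openness is only available within a proper algebraic subset of direction space, which your induction never constructs. Second, the Henneberg-type decomposition you lean on is not available off the shelf: the count $m'\le 2n'-c'-3c'_0$ is not a standard $(k,\ell)$-sparsity count (it depends on the $\rho$-image of each component), no inductive vertex-addition characterization of reflection-Laman graphs is established or cited, and proving one --- together with the bookkeeping you defer, namely that each move keeps $M(\vec d)$ singular with a faithful kernel vector while keeping $M(\vec d^\perp)$ full rank --- is the entire content of the theorem.

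The paper takes a different, non-inductive route. It localizes all the non-genericity into the Ross-circuits of $(G,\bgamma)$, which are vertex-disjoint (\propref{ross-circuit-decomp}). On each Ross-circuit it builds an explicit degenerate assignment from the decomposition into an identity-colored spanning tree (directions $\vec v$) and a reflection-$(1,1)$ graph (horizontal directions), exploiting the fact that horizontal directions are preserved by the reflection so that $M(\vec d)$ drops rank while $\vec d^\perp$ is exactly the collapsing construction of \propref{reflection-22-collapse}; a perturbation argument within this locus, powered by \propref{ross-realizations} on the circuit minus an edge, then produces faithful realizations (\propref{ross-circuit-pairs}). The circuits are glued via the reduced graph $(G^*,\bgamma)$, which is reflection-$(2,2)$ (\propref{reduced-graph}) and collapses under generic perpendicular directions. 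To salvage your approach you would need to either prove an inductive characterization of reflection-Laman graphs or replace the induction by an explicit construction of the singular locus, which is essentially what the paper does.
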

Briefly, we will use \theoref{direction-network} as follows: the faithful realization of
$(\tilde{G},\varphi,\vec d)$ gives a symmetric immersion of the graph $\tilde{G}$ that can be interpreted
as a framework, and the fact that $(\tilde{G},\varphi,\vec d^\perp)$ has only collapsed realizations
will imply that the only symmetric infinitesimal motions of this framework correspond to translation
parallel to the reflection axis.

\subsection{Notations and terminology}  In this paper, all graphs $G=(V,E)$ may be multi-graphs.  Typically,
the number of vertices, edges, and connected components are denoted by $n$, $m$, and $c$, respectively.
The notation for a colored graph is $(G,\bgamma)$, and a symmetric graph with a free $\Z/2\Z$-action
is denoted by $(\tilde{G},\varphi)$.  If  $(\tilde{G},\varphi)$ is the lift of $(G,\bgamma)$, we
denote the fiber over a vertex $i\in V(G)$ by $\tilde{i}_\gamma$, with $\gamma\in \Z/2\Z$, and the fiber over a directed
edge $ij$ with color $\gamma_{ij}$ by $\tilde{i}_\gamma \tilde{j}_{\gamma+\gamma_{ij}}$.

We also use \emph{$(k,\ell)$-sparse graphs} \cite{LS08} and their generalizations.  For a graph $G$, a
\emph{$(k,\ell)$-basis} is a maximal $(k,\ell)$-sparse subgraph; a \emph{$(k,\ell)$-circuit} is an edge-wise
minimal subgraph that is not $(k,\ell)$-sparse; and a \emph{$(k,\ell)$-component} is a maximal subgraph
that has a spanning $(k,\ell)$-graph.

Points in $\R^2$ are denoted by $\vec p_i = (x_i,y_i)$, indexed sets of points by $\vec p = (\vec p_i)$,
and direction vectors by $\vec d$ and $\vec v$.  Realizations
of a reflection direction network $(\tilde{G},\varphi,\vec d)$ are written as $\tilde{G}(\vec p)$, as are realizations
of abstract reflection frameworks.  Context will always make clear the type of realization under consideration.

\subsection{Acknowledgements}
LT is supported by the European Research Council under the European Union's Seventh Framework
Programme (FP7/2007-2013) / ERC grant agreement no 247029-SDModels. JM is supported by NSF
CDI-I grant DMR 0835586.

\section{Reflection-Laman graphs} \seclab{matroid}
In this short section we introduce the combinatorial families of sparse colored graphs we use.

\subsection{The map $\rho$}
Let $(G,\bgamma)$ be a $\Z/2\Z$-colored graph.  Since all the colored graphs in this paper have $\Z/2\Z$ colors,
from now on we make this assumption and write simply ``colored graph''.  We recall two key definitions
from \cite{MT11}.

The map $\rho : \HH_1(G, \Z)\to \Z/2\Z$ is defined on cycles by adding up the colors on the edges.  (The directions of the
edges don't matter for $\Z/2\Z$ colors.  Similarly, neither does the traversal order.)  As the notation suggests,
$\rho$ extends to a homomorphism from $\HH_1(G, \Z)$ to $\Z/2\Z$, and it is well-defined even if $G$ is not connected.

\subsection{Reflection-Laman graphs}
Let $(G,\bgamma)$ be a colored graph with $n$ vertices and $m$ edges.  We define $(G,\bgamma)$ to be a
\emph{reflection-Laman graph} if: the number of edges $m=2n-1$, and for all subgraphs $G'$, spanning $n'$
vertices, $m'$ edges, $c'$ connected components with non-trivial $\rho$-image and $c'_0$ connected components
with trivial $\rho$-image
\begin{equation}\eqlab{cone-laman}
m'\le 2n' - c' - 3c'_0
\end{equation}
This definition is equivalent to that of \emph{cone-Laman graphs} in \cite[Section 15.4]{MT11}.
The underlying graph $G$ of a reflection-Laman graph is a $(2,1)$-graph.

\subsection{Ross graphs and circuits}
Another family we need is that of \emph{Ross graphs}
(see \cite{BHMT11} for an explanation of the terminology).  These are colored graphs with $n$ vertices,
$m = 2n - 2$ edges, satisfying the sparsity counts
\begin{equation}\eqlab{ross}
m'\le 2n' - 2c' - 3c'_0
\end{equation}
using the same notations as in \eqref{cone-laman}.  In particular, Ross graphs $(G,\bgamma)$
have as their underlying graph, a $(2,2)$-graph $G$, and are thus connected \cite{LS08}.

A \emph{Ross-circuit}%
\footnote{The matroid of Ross graphs has more circuits, but these are the ones
we are interested in here.  See \secref{reflection-22}.} %
is a colored graph that becomes a Ross graph after removing \emph{any} edge.  The underlying
graph $G$ of a Ross-circuit $(G,\bgamma)$ is a $(2,2)$-circuit, and these are also known
to be connected \cite{LS08}, so, in particular, a Ross-circuit has $c'_0=0$, and
thus satisfies \eqref{cone-laman} on the whole graph.  Since \eqref{cone-laman} is
always at least \eqref{ross}, we see that every Ross-circuit is reflection-Laman.

Because reflection-Laman graphs are $(2,1)$-graphs and subgraphs that are $(2,2)$-sparse
are, in addition, Ross-sparse, we get the following structural result.
\begin{prop}[\xyzzy][{\cite[Proposition 5.1]{MT12},\cite[Lemma 11]{BHMT11}}]\proplab{ross-circuit-decomp}
Let $(G,\bgamma)$ be a reflection-Laman graph.  Then each $(2,2)$-component of $G$ contains at most one
Ross-circuit, and in particular, the Ross-circuits in $(G,\bgamma)$ are vertex disjoint.
\end{prop}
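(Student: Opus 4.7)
The plan is to prove both conclusions by contradiction, applying the reflection-Laman inequality \eqref{cone-laman} to two different subgraphs built from hypothetical offending Ross-circuits.

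As a preliminary, I would verify that every Ross-circuit $R$ has non-trivial $\rho$-image: its underlying graph is a $(2,2)$-circuit, hence connected, and if $\rho$ were trivial on $R$ then \eqref{cone-laman} would force $|E(R)| \le 2|V(R)| - 3$, contradicting $|E(R)| = 2|V(R)| - 1$. I would then establish the vertex-disjointness part first. Suppose distinct Ross-circuits $R_1, R_2$ share $s \ge 1$ vertices and $t$ edges; writing $n_i = |V(R_i)|$, the union $R_1 \cup R_2$ is connected with non-trivial $\rho$-image, so \eqref{cone-laman} applied to it reads $2(n_1 + n_2) - 2 - t \le 2(n_1 + n_2 - s) - 1$, i.e., $t \ge 2s - 1$. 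On the other hand, a Ross-circuit is minimally $(2,2)$-dependent, so every proper subgraph of $R_i$ is $(2,2)$-sparse. When the shared vertex set is a proper subset of both $V(R_1)$ and $V(R_2)$, the induced subgraph of $R_1$ on those $s$ vertices already contains all $t$ shared edges yet has at most $2s - 2$ of them, contradicting $t \ge 2s - 1$. The boundary cases $V(R_1) \subseteq V(R_2)$ or vice versa force $R_1 \subseteq R_2$ as subgraphs via the same count $t = m_1$; by minimality of $(2,2)$-circuits this collapses to $R_1 = R_2$, contradicting distinctness.

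For the first claim, suppose two distinct Ross-circuits $R_1, R_2$ lie in a common $(2,2)$-component $C$; by the vertex-disjointness just proved they are also edge-disjoint. Pick any spanning $(2,2)$-tight subgraph $S$ of $C$, so $|V(S)| = |V(C)|$ and $|E(S)| = 2|V(C)| - 2$. Since $S$ is $(2,2)$-sparse, its restriction to $V(R_i)$ has at most $2|V(R_i)| - 2$ edges, whereas $R_i$ has $2|V(R_i)| - 1$ edges; thus each $R_i$ contains at least one edge $e_i$ not in $S$, and by edge-disjointness $e_1 \ne e_2$. Hence $T := S \cup R_1 \cup R_2$ satisfies $|V(T)| = |V(C)|$ and $|E(T)| \ge 2|V(C)|$. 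But $T$ is connected (it contains $S$) with non-trivial $\rho$-image (it contains $R_1$), so \eqref{cone-laman} forces $|E(T)| \le 2|V(C)| - 1$, a contradiction.

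The main obstacle is recognizing the correct interplay between two sparsity counts: reflection-Laman controls the global structure of the union via $\rho$, while the internal $(2,2)$-circuit structure of a Ross-circuit guarantees that it carries exactly one edge in excess of any $(2,2)$-sparse spanning subgraph. Marrying these two counts, and in particular invoking edge-disjointness in the $(2,2)$-component step to ensure the surplus edges $e_1, e_2$ do not coincide, is the bookkeeping heart of the argument.
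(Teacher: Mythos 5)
Your argument is correct. Note that the paper itself gives no proof of \propref{ross-circuit-decomp} — it is imported by citation from \cite{MT12} and \cite{BHMT11} — so there is no in-paper argument to compare against; what you have written is a legitimate self-contained substitute. The two halves of your proof are sound: the preliminary observation that a Ross-circuit has non-trivial $\rho$-image follows from \eqref{cone-laman} exactly as you say (it is also implicit in the paper's remark that Ross-circuits satisfy \eqref{cone-laman} with $c'_0=0$); the inclusion–exclusion count $t\ge 2s-1$ against the $(2,2)$-sparsity of proper subgraphs of a $(2,2)$-circuit correctly kills any vertex overlap, and your case split (shared vertex set proper in both, versus one vertex set containing the other) is exhaustive, with the containment case collapsing to $R_1=R_2$ by circuit minimality. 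The second half — comparing a spanning $(2,2)$-tight subgraph $S$ of the component against the two edge-disjoint circuits to manufacture $2|V(C)|$ edges on $|V(C)|$ vertices inside a connected subgraph of non-trivial $\rho$-image — is a clean violation of \eqref{cone-laman}. Two small presentational points: you only need the shared vertex set to be a proper subset of \emph{one} of $V(R_1),V(R_2)$ to invoke sparsity of the induced subgraph, and you implicitly use that $(2,2)$-tight graphs are connected (so that $T=S\cup R_1\cup R_2$ has $c'=1$, $c'_0=0$); both are harmless and the latter is stated in the paper with a reference to \cite{LS08}. The ordering you chose (disjointness first, then the component statement) inverts the paper's ``in particular,'' but the logic is valid either way.
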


\subsection{Reflection-$(2,2)$ graphs}\seclab{reflection-22}
The next family of graphs we work with is new.  A colored graph $(G,\bgamma)$ is defined to be
a \emph{reflection-$(2,2)$} graph, if it has $n$ vertices, $m=2n-1$ edges, and satisfies the
sparsity counts
\begin{equation} \eqlab{ref22a}
m' \le 2n' - c' - 2c'_0
\end{equation}
using the same notations as in \eqref{cone-laman}.

The relationship between Ross graphs and reflection-$(2,2)$ graphs we will need is:
\begin{prop} \proplab{ross-adding}
Let $(G,\bgamma)$ be a Ross-graph.  Then for either
\begin{itemize}
\item an edge $ij$ with any color where $i \neq j$
\item or a self-loop $\ell$ at any vertex $i$ colored by $1$
\end{itemize}
the graph $(G+ij,\bgamma)$ or $(G+\ell,\bgamma)$ is reflection-$(2,2)$.
\end{prop}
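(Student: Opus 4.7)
The plan is to verify the edge count and the sparsity bound \eqref{ref22a} for every subgraph of the augmented graph $G + e$, where $e$ is either the new edge $ij$ with $i \ne j$ or the color-$1$ self-loop at $i$. The edge count is immediate since a Ross graph has $2n-2$ edges. For the sparsity, I start from the identity
$$(2n' - c' - 2c'_0) - (2n' - 2c' - 3c'_0) = c' + c'_0 \ge 0,$$
which shows that the reflection-$(2,2)$ bound always dominates the Ross bound \eqref{ross} by exactly the total number of components. Consequently, any subgraph $G' \subseteq G$ that avoids $e$ satisfies \eqref{ref22a} for free.

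The real work is for subgraphs $G'$ that contain $e$. Setting $G'' = G' - e$, the Ross sparsity of $G''$, together with $m' = m'' + 1$ and $n' = n''$, reduces the target to
$$2c'' + 3c''_0 - c' - 2c'_0 \ge 1.$$
I would verify this by a direct case analysis on how adding $e$ affects the components and their $\rho$-images. The subcases are: (a) $e = ij$ with $i, j$ in the same component of $G''$, in which case no components merge but a new cycle appears and the $\rho$-image of that component either stays the same or flips from trivial to nontrivial; (b) $e = ij$ with $i, j$ in distinct components of $G''$ (including the situations where $i$ or $j$ is absent from $V(G'')$ and enters as an isolated vertex of $G''$), so two components merge and the new $\rho$-image is the sum of the two; (c) $e$ is the color-$1$ self-loop at $i$, which forces the component of $i$ in $G'$ to have nontrivial $\rho$-image regardless of its prior state. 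In each subcase the change in $(c, c_0)$ from $G''$ to $G'$ is easy to read off, and the inequality is verified; the tightest situations are those where $(c', c'_0) = (c'', c''_0)$, where the requirement collapses to $c' + c'_0 \ge 1$, which holds because the component of $G'$ containing $e$ is nonempty.

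The main obstacle is simply keeping the bookkeeping of $c'$ and $c'_0$ straight across all subcases. The color-$1$ hypothesis on the self-loop is essential here: a color-$0$ loop would already fail \eqref{ref22a} on the one-edge subgraph $G' = \{e\}$, where $n' = 1$, $m' = 1$, $c' = 0$, $c'_0 = 1$ and the bound reads $0 < 1$. The same local check confirms that a color-$1$ loop and every edge $ij$ with $i \ne j$ satisfy \eqref{ref22a} by themselves, and the case analysis above extends this to all subgraphs.
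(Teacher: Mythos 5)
Your proof is correct in substance, but it takes a genuinely different route from the paper's. The paper argues via the matroid/circuit structure: a Ross graph already has the maximal $2n-2$ edges, so adding any edge creates a unique circuit, which is either a Ross-circuit (connected, with $c'_0=0$) or a Laman-circuit with trivial $\rho$-image; both types meet \eqref{ref22a} exactly, and the count then propagates to all of $(G+ij,\bgamma)$. You instead verify \eqref{ref22a} subgraph by subgraph, using the identity $(2n'-c'-2c'_0)-(2n'-2c'-3c'_0)=c'+c'_0\ge 0$ to dispose of subgraphs avoiding $e$ and a case analysis on how $e$ changes $(c,c_0)$ for the rest. This is more elementary and self-contained -- it needs no facts about circuits in the Ross matroid -- at the cost of heavier bookkeeping; the paper's version is shorter but leans on the localization of the new dependency to a single circuit. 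One wrinkle you should tidy up: when an endpoint of $e$ is not covered by $G''=G'-e$, you cannot apply the Ross count \eqref{ross} to $G''$ with that vertex retained as an isolated component, since a lone isolated vertex already violates \eqref{ross} (the sparsity counts must be read on edge-induced subgraphs). Instead apply \eqref{ross} to the edge-induced part of $G''$ and write $n'=n''+a$ with $a\in\{0,1,2\}$ the number of uncovered endpoints of $e$; each extra vertex contributes $+2$ to the right-hand side while creating at most one new component, so the target inequality only gets slacker and every subcase still closes. Your observation that the color-$1$ hypothesis on the self-loop is exactly what saves the one-edge subgraph is correct and is a useful sanity check that the paper leaves implicit.
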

\begin{proof}
Adding $ij$ with any color to a Ross $(G,\bgamma)$ creates either a Ross-circuit, for which
$c'_0=0$ or a Laman-circuit with trivial $\rho$-image.  Both of these types of
graph meet this count, and so the whole of $(G+ij,\bgamma)$ does as well.
\end{proof}
It is easy to see that every reflection-Laman graph is a reflection-$(2,2)$ graph.  The
converse is not true.
\begin{prop}\proplab{reflection-laman-vs-reflection-22}
A colored graph $(G,\bgamma)$ is a reflection-Laman graph if and only if it is a
reflection-$(2,2)$ graph and no subgraph with trivial $\rho$-image is a $(2,2)$-block.\eop
\end{prop}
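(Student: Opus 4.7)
The plan is to unwind the two sparsity counts \eqref{cone-laman} and \eqref{ref22a} and observe that they differ only by an extra $c'_0$ penalty in the reflection-Laman count. Since both definitions enforce $m = 2n-1$ on the whole graph, only the subgraph conditions require attention, and the only external input is the standard fact \cite{LS08} that a $(2,2)$-tight subgraph is connected, so that a $(2,2)$-block is a single connected subgraph $H$ with $m_H = 2n_H - 2$.

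For the forward direction, reflection-Laman implies reflection-$(2,2)$ at once because $3c'_0 \ge 2c'_0$. Now suppose some subgraph $G'$ with trivial $\rho$-image were a $(2,2)$-block. Then $G'$ is connected with $m' = 2n'-2$, so $c' = 0$ and $c'_0 = 1$; but \eqref{cone-laman} then forces $m' \le 2n'-3$, a contradiction.

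For the backward direction, given any subgraph $G'$, I split it into $G'_0$, the union of its connected components whose $\rho$-image is trivial, and $G'_*$, the union of the rest. With the notation of \eqref{cone-laman}, $G'_*$ accounts for $n'_*$ vertices, $m'_*$ edges, and $c'_* = c'$ components, and $G'_0$ accounts for $n'_0$ vertices, $m'_0$ edges, and $c'_0$ components. Applying \eqref{ref22a} to $G'_*$ (where no component has trivial $\rho$-image) yields $m'_* \le 2n'_* - c'$. For each connected component $H$ of $G'_0$, applying \eqref{ref22a} with $c' = 0$ and $c'_0 = 1$ gives $m_H \le 2n_H - 2$, so $H$ is $(2,2)$-sparse; because $H$ has trivial $\rho$-image, the no-block hypothesis forbids equality, giving the strict bound $m_H \le 2n_H - 3$. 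Summing over the $c'_0$ components of $G'_0$ yields $m'_0 \le 2n'_0 - 3c'_0$, and adding to the bound on $m'_*$ gives the reflection-Laman count $m' \le 2n' - c' - 3c'_0$.

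The argument is pure bookkeeping once the correct decomposition is set up; the only place any content enters is the step that upgrades $(2,2)$-sparsity to $m_H \le 2n_H - 3$ on each connected component of $G'_0$, which is where the hypothesis excluding trivial-$\rho$-image $(2,2)$-blocks is used and where connectedness of $(2,2)$-tight subgraphs guarantees that the components of $G'_0$ are exactly the candidate blocks to exclude.
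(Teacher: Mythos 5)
Your proof is correct and complete; the paper states this proposition with no proof at all (the authors treat it as immediate from the definitions), and your argument — splitting a subgraph into its trivial- and nontrivial-$\rho$ components and using the no-block hypothesis to upgrade $m_H \le 2n_H - 2$ to $m_H \le 2n_H - 3$ on each trivial component — is exactly the bookkeeping the authors are implicitly relying on. No gaps.
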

Let $(G,\bgamma)$ be a reflection-Laman graph, and let $G_1,G_2,\ldots,G_t$ be the Ross-circuits
in $(G,\bgamma)$.  Define the \emph{reduced graph} $(G^*,\bgamma)$ of $(G,\bgamma)$ to be the
colored graph obtained by contracting each $G_i$, which is not already a single vertex with a self-loop (this is necessarily
colored $1$), into a new vertex $v_i$, removing any self-loops
created in the process, and then adding a new self-loop with color $1$ to each of the $v_i$.
By \propref{ross-circuit-decomp} the reduced graph is well-defined.
\begin{prop}\proplab{reduced-graph}
Let $(G,\bgamma)$ be a reflection-Laman graph.  Then its reduced graph is a reflection-$(2,2)$ graph.
\end{prop}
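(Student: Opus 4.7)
The plan is to verify the defining sparsity inequality $m'^*\le 2n'^*-c'^*-2c'^*_0$ on every subgraph $H^*$ of $G^*$ by lifting $H^*$ to a subgraph $H\subseteq G$ and invoking the reflection-Laman bound on $H$. The total edge count $m^*=2n^*-1$ follows directly from the reduction: each Ross-circuit $G_i$ contributes $n_i$ vertices and $2n_i-1$ edges to $G$ but only one vertex and one self-loop to $G^*$, so the vertex and edge counts both drop by $\sum(n_i-1)$.

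Given $H^*$, the lift $H\subseteq G$ is obtained by replacing each reduced vertex $v_i\in V(H^*)$ with the entire Ross-circuit $G_i$, keeping the remaining vertices of $V(H^*)$, and pulling back every edge of $H^*$ that is not a self-loop added during the reduction. A routine count gives $n'=n'^*+\sum(n_i-1)$ and a matching formula for $m'$ in terms of $m'^*$, $\sum(2n_i-1)$, the number $s$ of reduced vertices in $V(H^*)$, and the number $\ell$ of self-loops at those vertices (the corrections ensure that edges internal to each $G_i$ are counted once).

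For the component correspondence, the key point is that each Ross-circuit $G_i$ is connected and has non-trivial $\rho$-image, since a connected Ross graph has $c'_0=0$ and $G_i$ spans such a graph. Unpacking therefore preserves the total number of components while converting every trivial-$\rho$ component of $H^*$ that contains some $v_i$ into a non-trivial-$\rho$ component of $H$. Writing $b$ for the number of such converted components, we obtain $c'=c'^*+b$ and $c'_0=c'^*_0-b$. Substituting everything into $m'\le 2n'-c'-3c'_0$ and simplifying reduces the target inequality for $H^*$ to the purely combinatorial bound $\ell+b\le s$.

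The final and essential step is $\ell+b\le s$. Every self-loop at a reduced vertex $v_i$ has color $1$: for contracted Ross-circuits this is part of the definition of the reduction, and for single-vertex Ross-circuits the original self-loop is forced to have color $1$ by the Ross count. Such a self-loop supplies a non-trivial $\rho$-cycle, so no self-loop at a $v_i$ can lie in a trivial-$\rho$ component of $H^*$. Hence each of the $b$ trivial-$\rho$ components of $H^*$ containing some $v_i$ must supply at least one $v_i$ with no attached self-loop, and since these vertices are distinct across components the bound $\ell+b\le s$ follows. The main delicacy of the argument is precisely this self-loop bookkeeping, because single-vertex and contracted Ross-circuits must be treated in parallel; uniformly tracking ``color-$1$ self-loops at reduced vertices'' absorbs the case analysis cleanly.
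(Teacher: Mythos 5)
Your proof is correct, but it takes a genuinely different route from the paper's. The paper selects a Ross-basis $(G',\bgamma)$ of $(G,\bgamma)$, observes that it is a $(2,2)$-basis of $G$ in which each Ross-circuit vertex set $V_i$ spans a $(2,2)$-block, and then invokes the Lee--Streinu Structure Theorem for $(k,\ell)$-sparse graphs to conclude that contracting the $V_i$ yields a $(2,2)$-sparse graph on $n^+$ vertices with $2n^+-1-t$ edges; the verification that adding the color-$1$ self-loops then satisfies the colored count \eqref{ref22a} is left as ``easy to check.'' You instead verify \eqref{ref22a} directly on an arbitrary subgraph $H^*$ of the reduced graph by un-contracting to a subgraph $H\subseteq G$ and applying the reflection-Laman count there, with explicit bookkeeping of vertices, edges, components, and $\rho$-images; the heart of your argument is the bound $\ell+b\le s$, which follows because the color-$1$ self-loops at reduced vertices force any component containing one to have non-trivial $\rho$-image. (Two small points you gloss over, both harmless: passing from your derived inequality to \eqref{ref22a} also uses the trivial bound $b\le c'^*_0$, and the component correspondence also needs that non-trivial components stay non-trivial after unpacking, which follows from the same fact that each $G_i$ is connected with non-trivial $\rho$-image.) What each approach buys: the paper's proof is shorter and reuses standard sparse-graph machinery, but hides the colored bookkeeping; yours is longer and entirely self-contained, avoids the Structure Theorem, and makes explicit exactly where the connectivity and non-trivial $\rho$-image of Ross-circuits and the color-$1$ self-loops are used --- which is essentially the content the paper's ``easy to check'' step would have to supply anyway.
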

\begin{proof}
Let $(G,\bgamma)$ be a reflection-Laman graph with $t$ Ross-circuits with vertex sets $V_1,\ldots,V_t$.
By \propref{ross-circuit-decomp}, the $V_i$ are all
disjoint. Now select a Ross-basis $(G',\bgamma)$ of $(G,\bgamma)$.  The graph $G'$
is also a $(2,2)$-basis of $G$, with $2n-1 - t$ edges, and each of the $V_i$ spans a $(2,2)$-block
in $G'$. The $(k,\ell)$-sparse graph
Structure Theorem \cite[Theorem 5]{LS08} implies that contracting each of the $V_i$ into a new vertex $v_i$
and discarding any self-loops created, yields a $(2,2)$-sparse graph $G^+$ on $n^+$ vertices and $2n^+ - 1 - t$
edges.  It is then easy to check that adding a self-loop colored $1$ at each of the $v_i$ produces
a colored graph satisfying the reflection-$(2,2)$ counts \eqref{ref22a} with exactly
$2n^+ -1$ edges.  Since this is the reduced graph, we are done.
\end{proof}

\subsection{Decomposition characterizations}
A \emph{map-graph} is a graph with exactly one cycle per connected component.  A \emph{reflection-$(1,1)$} graph
is defined to be a colored graph $(G,\bgamma)$ where $G$, taken as an undirected graph, is a map-graph and
the $\rho$-image of each connected component is non-trivial.
\begin{lemma}\lemlab{reflection-22-decomp}
Let $(G,\bgamma)$ be a colored graph.  Then $(G,\bgamma)$ is a reflection-$(2,2)$ graph if and only
if it is the union of a spanning tree and a reflection-$(1,1)$ graph.
\end{lemma}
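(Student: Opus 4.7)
This is a partition-into-bases statement and fits into the framework of Edmonds--Nash-Williams matroid union. The reverse direction is a direct sparsity count; the forward direction applies matroid union to the cycle matroid and a colored pseudoforest matroid on $E(G)$.

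\emph{Reverse direction.} Suppose $(G,\bgamma) = T \cup M$ (edge-disjoint) with $T$ a spanning tree and $M$ a reflection-$(1,1)$ subgraph. The edge total is $(n-1)+n = 2n-1$. For a subgraph $G'$ with $n'$ vertices, $c'$ non-trivial-$\rho$ components, and $c'_0$ trivial-$\rho$ components, $T \cap G'$ is a forest, so $|E(T \cap G')| \le n' - c' - c'_0$. For $M$, the key point is that every cycle of $M \cap G'$ lies in $M$ and thus has non-trivial $\rho$-image; hence on a trivial-$\rho$ component $C_0$ of $G'$ the subgraph $M \cap C_0$ is a forest with at most $|V(C_0)|-1$ edges, while on a non-trivial-$\rho$ component $C$ it is a pseudoforest with at most $|V(C)|$ edges. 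Summing gives $|E(M \cap G')| \le n' - c'_0$ and hence $|E(G')| \le 2n' - c' - 2c'_0$.

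\emph{Forward direction.} Let $\mathcal{M}_1$ be the cycle matroid of $G$ and let $\mathcal{M}_2$ be the matroid on $E(G)$ whose independent sets are subgraphs in which each connected component is either a tree or contains a unique cycle of non-trivial $\rho$-image. That $\mathcal{M}_2$ is a matroid is standard for frame matroids of $\Z/2\Z$-gain graphs, and its spanning bases are exactly the reflection-$(1,1)$ subgraphs; compare the analogous constructions in \cite{MT11,BHMT11}. An elementary spanning-forest-plus-one-cycle-edge computation yields, for $F \subseteq E(G)$ with $n' = |V(F)|$,
\begin{equation*}
r_1(F) = n' - c' - c'_0, \qquad r_2(F) = n' - c'_0,
\end{equation*}
so $r_1(F)+r_2(F) = 2n' - c' - 2c'_0$. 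By the Edmonds--Nash-Williams matroid union theorem, $E(G)$ admits a partition into a basis of $\mathcal{M}_1$ and a basis of $\mathcal{M}_2$ if and only if $|F| \le r_1(F) + r_2(F)$ for every $F \subseteq E(G)$, with equality at $F = E(G)$. The first condition is exactly the reflection-$(2,2)$ sparsity. For the second, the reflection-$(2,2)$ count at $G' = G$ together with $|E(G)|=2n-1$ forces $c=1$ and $c_0=0$ (no other integer solutions to $c+2c_0 \le 1$ with $c+c_0\ge 1$), so $G$ is connected with non-trivial $\rho$-image and $r_1(E(G))+r_2(E(G)) = (n-1)+n = 2n-1$. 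Connectedness of $G$ then makes the $\mathcal{M}_1$-basis a spanning tree, and non-triviality of $\rho$ on $G$ makes the $\mathcal{M}_2$-basis a spanning reflection-$(1,1)$ subgraph.

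\emph{Main obstacle.} The real content is packaging: identifying $\mathcal{M}_2$ as a matroid and computing its rank function correctly. Once this is in place, the reflection-$(2,2)$ sparsity count is exactly the matroid-union condition for the desired decomposition to exist, and the rest is bookkeeping.
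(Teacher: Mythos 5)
Your proof is correct and takes essentially the same route as the paper: the paper also writes the reflection-$(2,2)$ count as $(n'-c'_0)+(n'-c'-c'_0)$, recognizes the two terms as the rank functions of the reflection-$(1,1)$ count matroid (via Edmonds--Rota) and the graphic matroid, and invokes matroid union. Your version just makes explicit the rank computations, the base-vs-independent-set bookkeeping, and the easy counting direction that the paper leaves implicit.
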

\begin{proof}
By \cite[Lemma 15.1]{MT11}, reflection-$(1,1)$ graphs are equivalent to graphs satisfying
\begin{equation} \eqlab{ref11a}
m' \le n' - c'_0
\end{equation}
for every subgraph $G'$.  Thus, \eqref{ref22a} is
\begin{equation}\eqlab{ref22redux}
m' \le (n' - c'_0) + (n' - c' - c'_0)
\end{equation}
The second term in \eqref{ref22redux} is well-known to be the rank function of the graphic matroid, and
the Lemma follows from the Edmonds-Rota construction \cite{ER66} and the Matroid Union Theorem.
\end{proof}
In the next section, it will be convenient to use this slight refinement of \lemref{reflection-22-decomp}.
\begin{prop}\proplab{reflection-22-nice-decomp}
Let $(G,\bgamma)$ be a reflection-$(2,2)$ graph.  Then there is a coloring $\bgamma'$ of the edges of
$G$ such that:
\begin{itemize}
\item The $\rho$-image of every subgraph in $(G,\bgamma')$ is the same as in $(G,\bgamma)$.
\item There is a decomposition of $(G,\bgamma')$ as in \lemref{reflection-22-decomp}
in which the spanning tree has all edges colored by the identity.
\end{itemize}
\end{prop}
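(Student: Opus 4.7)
The plan is to modify $\bgamma$ by \emph{gauge transformations} at vertices: at a vertex $v$ with group element $g \in \Z/2\Z$, add $g$ to the color of every edge incident to $v$ (a self-loop at $v$ having both its ends at $v$). The first step is to verify that such transformations preserve the $\rho$-image of every subgraph. This is immediate: since every cycle meets each vertex in an even number of edge-ends, the total change in the color of any cycle is $2g = 0$ in $\Z/2\Z$, so the homomorphism $\rho : \HH_1(G', \Z) \to \Z/2\Z$ is unchanged for every subgraph $G'$.

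With that in hand, the main idea is to use \lemref{reflection-22-decomp} to fix a decomposition $E(G) = T \sqcup M$ with $T$ a spanning tree and $M$ a reflection-$(1,1)$ subgraph, and then choose a sequence of gauge transformations that turns every edge of $T$ into an identity-colored edge. The decomposition itself is purely an edge-set decomposition; the preservation of $\rho$ then guarantees that $M$ remains reflection-$(1,1)$ under the new coloring $\bgamma'$, so the same decomposition witnesses \lemref{reflection-22-decomp} for $(G,\bgamma')$.

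To carry out the re-coloring, I would root $T$ at an arbitrary vertex $r$ and process the non-root vertices in BFS order. At a vertex $v$ with tree edge $e_v$ to its parent $p(v)$, apply the gauge transformation at $v$ whose group element equals the current color of $e_v$; this sets the color of $e_v$ to the identity. The mild subtlety to check — the only place where things could go wrong — is that later transformations must not disturb $e_v$. But any subsequent gauge transformation happens at a descendant $w$ of $v$, which changes only the colors of edges incident to $w$ and hence does not touch $e_v$. Once all non-root vertices have been processed, every edge of $T$ carries the identity color, yielding the desired $\bgamma'$. The main obstacle is essentially this bookkeeping on the order of operations; once the gauge-invariance of $\rho$ is established, the rest is routine.
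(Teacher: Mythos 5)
Your proof is correct, and it takes a genuinely different (though closely related) route from the paper's. The paper fixes the decomposition into a spanning tree $T$ and a reflection-$(1,1)$ graph $X$ from \lemref{reflection-22-decomp}, then defines $\bgamma'$ in one shot: tree edges get the identity, and each non-tree edge gets the $\rho$-image of its fundamental cycle with respect to $T$ in $(G,\bgamma)$; invariance of $\rho$ is then delegated to the cited fact that $\rho$ is determined by its values on a homology basis. Your gauge-transformation argument produces the same coloring (the net gauge element at a vertex $v$ is the color-sum of the tree path from the root to $v$, so a non-tree edge ends up colored by its fundamental cycle's $\rho$-image), but it is more elementary and self-contained: the parity argument that every cycle meets each vertex in an even number of edge-ends replaces the appeal to the homology-basis lemma, and you correctly handle self-loops as being fixed by every gauge transformation. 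What the paper's version buys is brevity; what yours buys is independence from the external reference. One small imprecision in your bookkeeping: a vertex $w$ processed after $v$ in BFS order need not be a \emph{descendant} of $v$ (it could be a sibling or lie in another subtree). The step still goes through, but the right justification is that the edge $e_v$ is incident only to $v$ and $p(v)$, and neither is processed after $v$ --- $v$ is processed exactly once and $p(v)$ strictly earlier (or is the never-processed root) --- so no later transformation can alter $e_v$.
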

\begin{proof}
It is shown in \cite[Lemma 2.2]{MT10} that $\rho$ is determined by its image on a
homology basis of $G$.  Thus, we may start with an arbitrary decomposition of $(G,\bgamma)$
into a spanning tree $T$ and a reflection-$(1,1)$ graph $X$, as provided by
\lemref{reflection-22-decomp}, and define $\bgamma'$ by coloring the edges of $T$ with the identity
and the edges of $X$ with the $\rho$-image of their fundamental cycle in $T$ in $(G,\bgamma)$.
\end{proof}
\propref{reflection-22-nice-decomp} has the following re-interpretation in terms of the symmetric
lift $(\tilde{G},\varphi)$:
\begin{prop}\proplab{reflection-laman-decomp-lift}
Let $(G,\bgamma)$ be a reflection-$(2,2)$ graph.  Then for a decomposition, as provided
by \propref{reflection-22-nice-decomp}, into a spanning tree $T$ and a reflection-$(1,1)$
graph $X$:
\begin{itemize}
\item Every edge $ij\in T$ lifts to the two edges $i_0j_0$ and $i_1j_1$.
(In other words, the vertex representatives in the lift all lie	in a single connected
component of the lift of $T$.)
\item Each connected component of $X$ lifts to a connected graph.
\end{itemize}
\end{prop}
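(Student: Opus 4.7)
The plan is to unpack both bullets directly from the definitions of the lift, the ``nice decomposition'' provided by \propref{reflection-22-nice-decomp}, and the standard relationship between the $\rho$-image and connectivity of lifts.

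For the first bullet, I would use the fact that in the nice decomposition, every edge $ij\in T$ is colored by the identity $0\in\Z/2\Z$. By the fiber description of the lift recalled in the introduction, an edge $ij$ of $(G,\bgamma)$ with color $\gamma_{ij}$ lifts to the two edges $\tilde i_\gamma \tilde j_{\gamma+\gamma_{ij}}$ for $\gamma\in \Z/2\Z$. When $\gamma_{ij}=0$, these are exactly $\tilde i_0\tilde j_0$ and $\tilde i_1\tilde j_1$. Hence, walking through any path in $T$ connects every $\tilde i_0$ to every $\tilde j_0$ inside a single connected component of the lift of $T$, which gives the parenthetical claim.

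For the second bullet, I would appeal to the standard fact (implicit in the covering-theoretic setup of \secref{matroid}) that the symmetric graph $(\tilde G,\varphi)$ is the cover of $G$ corresponding to the kernel of $\rho:\HH_1(G,\Z)\to \Z/2\Z$, and that for a subgraph $H\subseteq G$ the number of connected components of the lift of $H$ in $\tilde G$ equals the index $[\Z/2\Z : \rho(\HH_1(H,\Z))]$. Let $C$ be a connected component of $X$. By the definition of reflection-$(1,1)$ graph, $\rho(\HH_1(C,\Z))$ is non-trivial; since $\Z/2\Z$ has only two subgroups, this forces $\rho(\HH_1(C,\Z))=\Z/2\Z$. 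Therefore the index is $1$, and the lift of $C$ is connected.

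The argument is mostly a matter of translating between the colored-graph language and the covering-space language; the main (very minor) obstacle is making sure the fiber convention for edges with non-identity color is used correctly when verifying that the two lifted edges in the first bullet are indeed $\tilde i_0\tilde j_0$ and $\tilde i_1\tilde j_1$ rather than the ``twisted'' pair. Once the nice decomposition ensures $\gamma_{ij}=0$ on $T$, both bullets follow with no further calculation.
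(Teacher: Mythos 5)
Your proof is correct, and it spells out exactly the reasoning the paper leaves implicit: the paper states this proposition without proof as an immediate ``re-interpretation'' of \propref{reflection-22-nice-decomp}, relying on the same two facts you use, namely the edge-fiber convention $\tilde i_\gamma\tilde j_{\gamma+\gamma_{ij}}$ applied to identity-colored tree edges, and the standard covering-space fact that a connected subgraph with non-trivial $\rho$-image lifts connectedly. No gaps; your handling of the non-identity-color convention and of why the non-trivial $\rho$-image must be all of $\Z/2\Z$ is exactly what is needed.
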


\section{Special pairs of reflection direction networks} \seclab{direction-network}
We recall, from the introduction, that for reflection direction networks, $\Z/2\Z$ acts on the plane
by reflection through the $y$-axis, and in the rest of this section $\Phi(\gamma)$ refers to this action.

\subsection{The colored realization system}
The system of equations \eqref{dn-realization1}--\eqref{dn-realization2}
defining the realization space of a reflection direction network
$(\tilde{G},\varphi,\vec d)$ is linear, and
as such has a well-defined dimension.  Let $(G,\bgamma)$ be the
colored quotient graph of $(\tilde{G},\varphi)$.

To be realizable at all, the directions on the edges in the fiber over $ij\in E(G)$
need to be reflections of each other.  Thus, we see that the realization system
is canonically identified with the solutions to the system:
\begin{eqnarray}\eqlab{colored-system}
\iprod{\Phi(\gamma_{ij})\cdot\vec p_j - \vec p_i}{\vec d_{ij}} = 0 & \qquad
\text{for all edges $ij\in E(G)$}
\end{eqnarray}
From now on, we will implicitly switch between the two formalisms when it is
convenient.

\subsection{Genericity}
Let $(G,\bgamma)$ be a colored graph with $m$ edges.  A statement about direction networks
$(\tilde{G},\varphi,\vec d)$ is \emph{generic} if it holds on the complement of a
proper algebraic subset of the possible direction assignments, which is canonically identified
with $\R^{2m}$. Some facts about generic statements that
we use frequently are:
\begin{itemize}
\item Almost all direction assignments are generic.
\item If a set of directions is generic, then so are all sufficiently
small perturbations of it.
\item If two properties are generic, then their intersection is as well.
\item The maximum rank of \eqref{colored-system} is a generic property.
\end{itemize}

\subsection{Direction networks on Ross graphs}
We first characterize the colored graphs for which generic direction networks have
strongly faithful realizations.  A realization is \emph{strongly faithful} if no
two vertices lie on top of each other.  This is a stronger condition than simply being
faithful which only requires that edges not be collapsed.
\begin{prop}\proplab{ross-realizations}
A generic direction network  $(\tilde{G},\varphi,\vec d)$
has a unique, up to translation and scaling, strongly faithful realization
if and only if its associated colored graph is a Ross graph.
\end{prop}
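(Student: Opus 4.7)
The plan is to characterize when the colored linear system \eqref{colored-system} has solution space of dimension exactly $2$ for generic directions, matching the $2$-dimensional group of symmetries (translation along the $y$-axis plus global scaling) that sits inside the solution space for \emph{any} direction assignment. The proposition follows once this generic dimension is pinned down in combinatorial terms, together with a separate (standard) genericity argument for strong faithfulness.

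For necessity ($\Rightarrow$), assume that generic direction networks admit unique strongly faithful realizations modulo translation and scaling. Then the kernel of \eqref{colored-system} is exactly the $2$-dimensional symmetry space, so the rank equals $m$ and hence $m = 2n - 2$. For any subgraph $G'$ with $c'$ components of non-trivial $\rho$-image and $c'_0$ components of trivial $\rho$-image, the submatrix has rank $m'$ (rows of a maximum-rank matrix are independent), so its kernel in $\R^{2n'}$ has dimension $2n' - m'$. Inside this kernel I will exhibit $2c' + 3c'_0$ linearly independent elements: each non-trivial-$\rho$ component contributes one $y$-translation and one scaling, while each trivial-$\rho$ component contributes one $y$-translation, one scaling, and additionally one ``gauge-weighted $x$-translation'' in which vertices are shifted by $(\pm s, 0)$ according to a gauge function $g$ satisfying $g(i) + g(j) = \gamma_{ij}$ (such $g$ exists precisely because $\rho$ is trivial on the component). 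These elements are independent across disjoint components, giving $2n' - m' \ge 2c' + 3c'_0$, which rearranges to the Ross sparsity.

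For sufficiency ($\Leftarrow$), given a Ross graph $(G, \bgamma)$, I must exhibit one direction assignment for which \eqref{colored-system} has rank $m$ and a strongly faithful realization; by upper semi-continuity of matrix rank and the fact that ``no two vertices coincide'' is an open condition, this suffices for the generic claim. I plan to proceed by an inductive construction: Ross graphs admit a Henneberg-type inductive description (cf. \cite{BHMT11}), and at each inductive move I verify that generic direction choices preserve maximum rank while also allowing a strongly faithful placement of the new vertex. The base case is a small Ross graph (for example, two vertices joined by one color-$0$ and one color-$1$ edge), which can be verified by explicit computation.

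The main obstacle will be the inductive rank step: one must show that adding a vertex with its two (or more) new edges cannot introduce a hidden linear dependence in the direction network matrix beyond the $2$-dimensional symmetry kernel. I expect this to reduce to showing nonvanishing of specific minors whose entries are polynomials in the direction components, so nonvanishing is a generic condition. Strong faithfulness at each step will follow because the set of direction choices forcing the new vertex to coincide with an existing one cuts out a proper algebraic subset of the direction space.
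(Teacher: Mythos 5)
The serious gap is in your sufficiency direction, whose two load-bearing steps are both deferred rather than proved. A Henneberg-type inductive characterization of Ross graphs is not available off the shelf: \cite{BHMT11} is used in this paper only for terminology and a decomposition lemma, and the counts $m'\le 2n'-2c'-3c'_0$ assign different deficits to components according to their $\rho$-image, so identifying a complete set of inductive moves (which would have to include edge splittings and moves that interact with the colors, not just degree-$2$ vertex additions) and proving they generate the class is itself a substantial combinatorial theorem. Worse, the inductive rank step you flag as ``the main obstacle'' is precisely where the content of the proposition lies: to show a minor is generically nonzero you must exhibit a witness, and for reflection frameworks the usual parallel-redrawing witnesses fail. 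The paper avoids all of this. It first proves \propref{reflection-22-collapse} by one explicit global assignment (a generic direction $\vec v$ on an identity-colored spanning tree, the vertical direction on the complementary reflection-$(1,1)$ graph), which forces every realization of a reflection-$(2,2)$ graph to collapse onto a single point of the $y$-axis. For a Ross graph it then chooses directions that remain generic for every one-edge extension to a reflection-$(2,2)$ graph (\propref{ross-adding}); collapse of each extension pins the Ross network's solution space at dimension exactly $2$, and strong faithfulness falls out for free: if two vertices coincided in every solution, the edge joining them would be a dependent row and the extended network would still have a $2$-dimensional solution space, contradicting the collapse. Your plan has no substitute for this last step --- asserting that the bad set of directions is a \emph{proper} algebraic subset again requires exhibiting a witness.

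Your necessity argument is closer to working (the paper does not even write this direction out), but two points need repair. First, ``the kernel is $2$-dimensional, so the rank equals $m$'' is circular: a $2$-dimensional kernel gives rank $2n-2$, hence $m\ge 2n-2$, and you must separately rule out $m>2n-2$ before you may treat the rows of submatrices as independent. Second, ``scaling'' is not an extra kernel element of the homogeneous system \eqref{colored-system}; the second vector per component must be taken to be the restriction of the strongly faithful realization $\vec p^0$, and its independence from the $y$-translation and from your gauge-weighted $x$-translation is exactly where strong faithfulness enters: in the lift it forces $x_i\ne 0$ for every vertex and $\vec p_i\ne\Phi(1)\cdot\vec p_j$ for $i\ne j$, which rules out $\vec p^0$ restricting on any component to a configuration supported on a reflection-symmetric pair of points $(\pm s,t)$. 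With those repairs the count $2n'-m'\ge 2c'+3c'_0$ does rearrange to the Ross sparsity condition.
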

To prove \propref{ross-realizations} we expand upon the method from \cite[Section 20.2]{MT11},
and use the following proposition.
\begin{prop}\proplab{reflection-22-collapse}
Let $(G,\bgamma)$ be a reflection-$(2,2)$ graph.  Then a generic direction network on the
symmetric lift $(\tilde{G},\varphi)$ of $(G,\bgamma)$ has only collapsed realizations.
\end{prop}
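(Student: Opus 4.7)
The plan is to exhibit a single direction assignment $\vec d$ on $(G,\bgamma)$ for which the realization system~\eqref{colored-system} has kernel equal to the $1$-dimensional space of collapsed realizations $\vec p_i = (0,t)$ (which is always contained in the kernel). Since the maximum rank of~\eqref{colored-system} is a generic, Zariski-open condition on $\vec d$, such an example immediately implies the generic statement.

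I would first apply \propref{reflection-22-nice-decomp} to write $(G,\bgamma) = T \cup X$ with $T$ an identity-colored spanning tree and $X$ a reflection-$(1,1)$ subgraph on $n$ edges; since the components of $X$ are pseudotrees we have $|V(X)| = |E(X)| = n$, so $X$ spans $V(G)$. I would then set $\vec d_{ij} = (0,1)$ on every edge of $T$ and $\vec d_{ij} = (1,0)$ on every edge of $X$. With these choices and writing $\vec p_i = (x_i, y_i)$, the equations of~\eqref{colored-system} decouple into $x$- and $y$-parts: each tree edge contributes $y_j - y_i = 0$, while each edge $ij \in X$ with color $\gamma_{ij}$ contributes $x_j - x_i = 0$ if $\gamma_{ij}=0$ and $x_i + x_j = 0$ if $\gamma_{ij}=1$. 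The tree subsystem is the incidence system of a connected graph on $n$ vertices, so it has rank $n-1$ in the $y$-variables, with kernel spanned by the constant vector.

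The $X$ subsystem is, in the $x$-variables, the signed incidence matrix of $X$ with color-$1$ edges negated, and it has full rank $n$ precisely when no component of $X$ is ``balanced'' (all cycles $\rho$-trivial). Because $X$ is reflection-$(1,1)$, every component contains its unique cycle with non-trivial $\rho$-image; walking around such a cycle at a base vertex $v$ imposes $x_v = -x_v$, forcing $x_v = 0$, and the remaining component equations then propagate $0$ to every $x$-variable in the component. Thus the full system has rank $(n-1) + n = 2n - 1 = m$ and kernel exactly the collapsed realizations, and lower semi-continuity of the rank promotes this to the generic statement. The main obstacle is the rank computation for the $X$ subsystem, which is essentially the classical fact that the signed incidence matrix of an unbalanced signed graph has full column rank; the cycle argument above gives a self-contained proof, and a closely related computation is implicit in \cite[Lemma 15.1]{MT11}.
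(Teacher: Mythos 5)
Your proposal is correct and takes essentially the same route as the paper: the same decomposition via \propref{reflection-22-nice-decomp} into an identity-colored spanning tree $T$ and a reflection-$(1,1)$ graph $X$, the same strategy of exhibiting one explicit witness assignment and invoking genericity of maximal rank, and the same division of labor in which $T$ pins one coordinate and the unbalanced components of $X$ pin the other via the reflection. The only difference is presentational: you specialize the tree constraint so that \eqref{colored-system} decouples into independent $x$- and $y$-subsystems and verify collapse by a direct rank/kernel computation, whereas the paper keeps a general non-axis direction $\vec v$ on $T$ and argues geometrically that all vertices must lie at the intersection of a line $L$, its reflection, and the $y$-axis.
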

Since the proof of \propref{reflection-22-collapse} requires a detailed construction,
we first show how it implies \propref{ross-realizations}.
\subsection{Proof that \propref{reflection-22-collapse} implies \propref{ross-realizations}}
Let $(G,\bgamma)$ be a Ross graph, and assign directions $\vec d$ to the edges of $G$ such that,
for any extension $(G+ij,\bgamma)$ of $(G,\bgamma)$ to a reflection-$(2,2)$ graph as
in \propref{ross-adding}, $\vec d$ can be extended to a set of directions that is generic
in the sense of \propref{reflection-22-collapse}.  This is possible because there are a finite
number of such extensions.

For this choice of $\vec d$, the realization space of the direction network $(\tilde{G},\varphi,\vec d)$
is $2$-dimensional.  Since solutions to \eqref{colored-system} may be scaled or translated in the vertical
direction, all solutions to $(\tilde{G},\varphi,\vec d)$ are related by scaling and translation.  It
then follows that a pair of vertices in the fibers over $i$ and $j$ are either distinct from each other
in all non-zero solutions to \eqref{colored-system} or always coincide.  In the latter case, adding the
edge $ij$ with any direction does not change the dimension of the solution space, no matter
what direction we assign to it.  It then follows that the solution spaces of generic
direction networks on $(\tilde{G},\varphi,\vec d)$ and $(\widetilde{G+ij},\varphi,\vec d)$ have
the same dimension, which is a contradiction by \propref{reflection-22-collapse}.
\eop

\subsection{Proof of \propref{reflection-22-collapse}}
It is sufficient to construct a specific set of directions with this property.  The
rest of the proof gives such a construction and verifies that all the solutions
are collapsed.  Let $(G,\bgamma)$ be a reflection-$(2,2)$ graph.

\paragraph{Combinatorial decomposition}
We apply \propref{reflection-22-nice-decomp} to decompose $(G,\bgamma)$ into a
spanning tree $T$ with all colors the identity and a reflection-$(1,1)$ graph $X$.  For
now, we further assume that $X$ is connected.

\paragraph{Assigning directions}
Let $\vec v$ be a direction vector that is not
horizontal or vertical.  For each edge $ij\in T$, set $\vec d_{ij} = \vec v$.
Assign all the edges of $X$ the vertical
direction.  Denote by $\vec d$ this assignment of directions.
\begin{figure}[htbp]
\centering
\includegraphics[width=.3\textwidth]{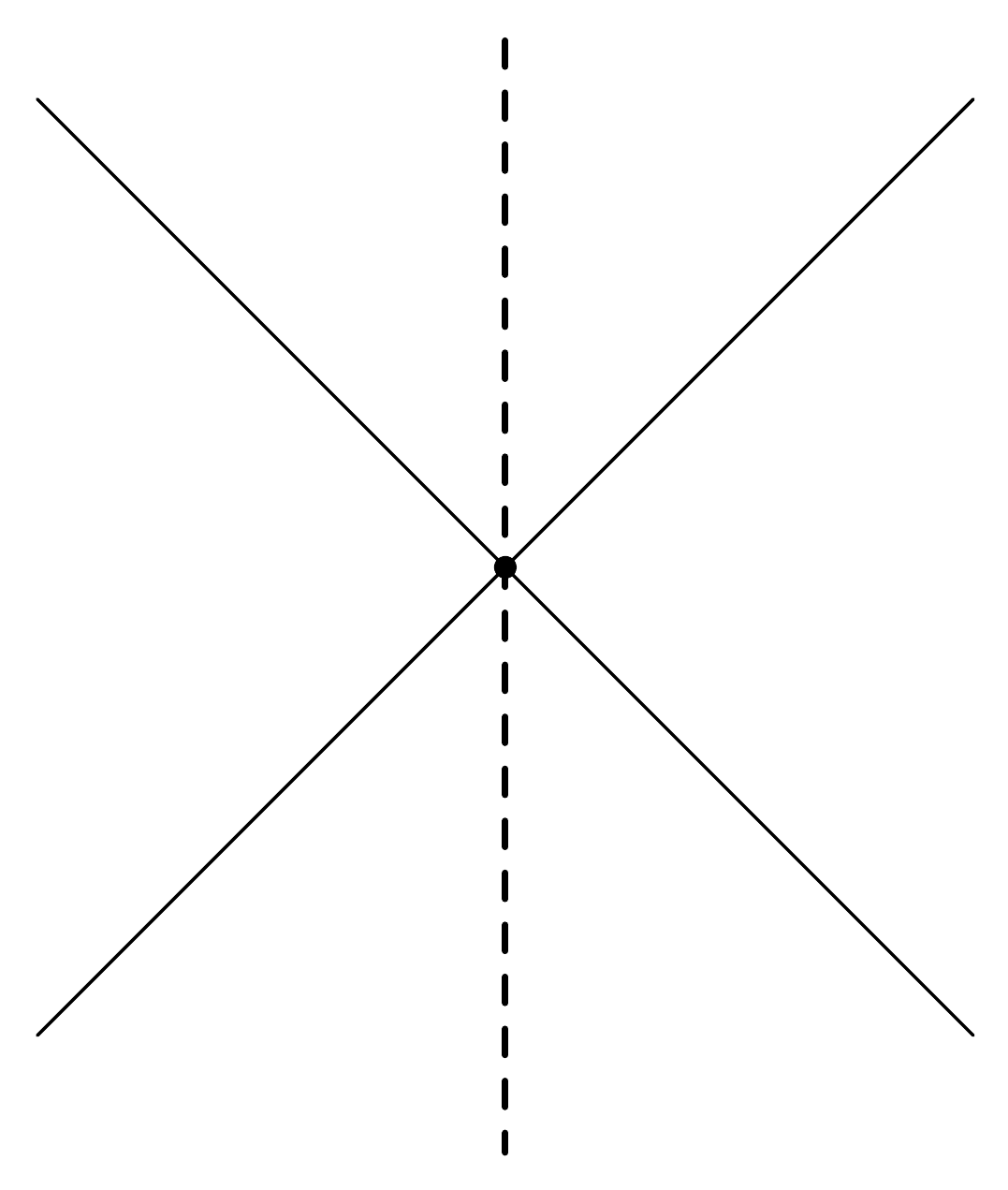}
\caption{Schematic of the proof of \propref{reflection-22-collapse}: the $y$-axis
is shown as a dashed line.  The directions on the edges of the lift of the tree
$T$ force all the vertices to be on one of the two lines meeting at the $y$-axis,
and the directions on the reflection-$(1,1)$ graph $X$ force all the vertices to
be on the $y$-axis.}
\label{fig:ref-22-collapse}
\end{figure}
\paragraph{All realizations are collapsed}
We now show that the only realizations of $(\tilde{G},\varphi,\vec d)$ have
all vertices on top of each other.  By \propref{reflection-laman-decomp-lift}
$T$ lifts to two copies of itself, in $\tilde{G}$.  It then follows from
the connectivity of $T$ and the construction of $\vec d$ that, in any
realization, there is a line $L$ with direction $\vec v$ such that every vertex
of $\tilde{G}$ must lie on $L$ or its reflection.  Since the vertical direction
is preserved by reflection, the connectivity of the lift of $X$, again from
\propref{reflection-laman-decomp-lift}, implies that every vertex of $\tilde{G}$
lies on a single vertical line, which must be the $y$-axis by reflective symmetry.

Thus, in any realization of $(\tilde{G},\varphi,\vec d)$ all the vertices lie
at the intersection of $L$, the reflection of $L$
through the $y$-axis and the $y$-axis itself.  This is a single point, as
desired.  \figref{ref-22-collapse} shows a schematic of this argument.

\paragraph{$X$ does not need to be connected}
Finally, we can remove the assumption that $X$  was connected by repeating the argument
for each connected component of $X$ separately.
\eop

\subsection{Special pairs for Ross-circuits}
The full \theoref{direction-network} will reduce to the case of a Ross-circuit.
\begin{prop}\proplab{ross-circuit-pairs}
Let $(G,\bgamma)$ be a Ross circuit with lift $(\tilde{G},\varphi)$.  Then there
is an edge $i'j'$ such that, for a generic direction network $(\tilde{G'},\varphi,\vec d')$ with colored graph
$(G-i'j',\bgamma)$:
\begin{itemize}
\item The solution space of  $(\tilde{G'},\varphi,\vec d')$ induces a well-defined direction
$\vec d_{ij}$ between $i$ and $j$, yielding an assignment of directions $\vec d$ to the edges
of $G$.
\item The direction networks $(\tilde{G},\varphi,\vec d)$
and $(\tilde{G},\varphi,(\vec d)^\perp)$ are a special pair.
\end{itemize}
\end{prop}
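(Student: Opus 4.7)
The plan is to extend a generic direction network on the Ross subgraph obtained by removing one edge of the Ross-circuit, and then use Proposition~\propref{reflection-22-collapse} to control the perpendicular direction network. Pick any edge $i'j'$ of $(G,\bgamma)$; by the definition of a Ross-circuit, $(G-i'j',\bgamma)$ is a Ross graph. By Proposition~\propref{ross-realizations}, for generic $\vec d'$ on $G-i'j'$ the direction network $(\tilde{G'},\varphi,\vec d')$ admits a unique strongly faithful realization $q$ up to scaling and translation along the $y$-axis. Since these ambiguities preserve directions between fixed lifted vertices, $\vec d_{i'j'}:=q_{\tilde{j'}}-q_{\tilde{i'}}$ is well-defined. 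Extending $\vec d'$ by $\vec d_{i'j'}$ to $\vec d$ on $G$, the realization $q$ is automatically a faithful realization of $(\tilde{G},\varphi,\vec d)$, giving the first half of the special pair condition.

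For the second half, since perpendicularization $\vec v\mapsto\vec v^\perp$ is a linear isomorphism of direction assignments, the set of $\vec d'$ for which both $\vec d'$ and $(\vec d')^\perp$ are generic in the sense of Proposition~\propref{ross-realizations} is still open and dense. Then $(\tilde{G'},\varphi,(\vec d')^\perp)$ has a two-dimensional solution space parameterized as $\{\beta q'+t_y:\beta\in\R,\,t_y\in\R\,e_y\}$ for some strongly faithful realization $q'$, and every solution $p$ of $(\tilde{G},\varphi,\vec d^\perp)$ restricts to this family. Substituting into the edge-$i'j'$ constraint $\iprod{p_{\tilde{j'}}-p_{\tilde{i'}}}{\vec d_{i'j'}}=0$ and using $\vec d_{i'j'}\parallel q_{\tilde{j'}}-q_{\tilde{i'}}$ while the translation $t_y$ contributes zero to edge differences, this reduces to
\[
\beta\,\iprod{q'_{\tilde{j'}}-q'_{\tilde{i'}}}{q_{\tilde{j'}}-q_{\tilde{i'}}}=0.
\]
Whenever the inner product is nonzero, $\beta=0$ and $p$ is a collapsed realization on the $y$-axis, as required.

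The main obstacle is verifying that this inner product is generically nonzero as a polynomial in $\vec d'$. Since it is polynomial, it is enough to exhibit any $\vec d'$ at which it is nonzero. I would do so using the explicit direction $\vec d_0$ from the proof of Proposition~\propref{reflection-22-collapse} — tree edges in direction $\vec v$ and reflection-$(1,1)$ edges vertical, via the decomposition of Proposition~\propref{reflection-22-nice-decomp} — and observe that an analogous collapse argument, with $\vec v^\perp$ and horizontal directions intersecting only on the $y$-axis, shows $\vec d_0^\perp$ also has full rank $2n-1$. Taking $\vec d':=\vec d_0|_{G-i'j'}$ (possibly after a small perturbation to land in the open dense generic set of Proposition~\propref{ross-realizations}), the induced extension $\vec d$ lies in the rank-drop locus for $\vec d$, while $\vec d^\perp$ differs from $\vec d_0^\perp$ only on edge $i'j'$; its rank remains $2n-1$ provided the new direction $\vec d_{i'j'}^\perp$ is not parallel to $q'_{\tilde{j'}}-q'_{\tilde{i'}}$, which is exactly the non-perpendicularity of $q_{\tilde{j'}}-q_{\tilde{i'}}$ and $q'_{\tilde{j'}}-q'_{\tilde{i'}}$. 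Checking this non-perpendicularity in the explicit construction is the step requiring the most care, but it produces the witness $\vec d'$ needed to conclude that the inner product condition holds generically.
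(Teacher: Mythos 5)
Your reduction of the special-pair condition to a single scalar inequality is a nice reformulation: extending a generic $\vec d'$ on the Ross graph $G-i'j'$ by the induced direction, parameterizing the solutions of $(\tilde{G}',\varphi,(\vec d')^\perp)$ as $\beta q'+t\,e_y$, and observing that the extra equation forces $\beta\iprod{q'_{\tilde{j'}}-q'_{\tilde{i'}}}{q_{\tilde{j'}}-q_{\tilde{i'}}}=0$ is essentially sound, and it correctly isolates what must be proved. But the witness you offer for the generic non-vanishing of that inner product rests on a false claim, and the claim is false for exactly the reason this paper exists. You assert that $\vec d_0^\perp$ (tree edges in direction $\vec v^\perp$, reflection-$(1,1)$ edges horizontal) has only collapsed realizations ``by an analogous collapse argument.'' The collapse argument for $\vec d_0$ works because the connected lift of $X$ confines all vertices to a \emph{single vertical line}, and the only vertical line equal to its own reflection is the $y$-axis; intersecting with $L\cup\Phi(L)$ then gives one point. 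For $\vec d_0^\perp$ the same connectivity confines all vertices to a single \emph{horizontal} line $H$ --- but \emph{every} horizontal line is preserved by reflection through the $y$-axis, so $H$ is not pinned to anything. The solutions place all vertices of one lift of $T$ at $L'\cap H$ and the others at $\Phi(L')\cap H$: a $2$-parameter family of non-collapsed realizations, so $\vec d_0^\perp$ has rank $2n-2$, not $2n-1$. This is precisely the asymmetry the paper highlights (``horizontal edge directions are preserved by the reflection, so the gadget \dots degenerates to just a single line''), and it is the reason rank is \emph{not} invariant under turning directions by $\pi/2$ here. The paper's proof exploits this the other way around: it takes $\vec d$ to be the tree-plus-\emph{horizontal} assignment (rank-deficient, but with a non-collapsed fiber over a color-$1$ edge $i'j'$ on the cycle of $X$), so that $\vec d^\perp$ is the tree-plus-vertical assignment covered by \propref{reflection-22-collapse}, and then perturbs $\vec d'$ to be generic for \propref{ross-realizations}.

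Two further points need attention even if you repair the witness. First, ``pick any edge $i'j'$'' is not justified: the paper chooses $i'j'$ on the cycle of the reflection-$(1,1)$ part with $\gamma_{i'j'}\neq 0$, and that choice is what makes the fiber over $i'j'$ non-collapsed in the degenerate realization (the two endpoints land on $L$ and $\Phi(L)$ respectively); the proposition only claims \emph{some} edge works, so you should commit to this one. Second, your inner product is only well-defined (up to scale) where both solution spaces are exactly $2$-dimensional, so evaluating it ``at'' the degenerate $\vec d_0|_{G'}$ requires a continuity/perturbation argument of the kind the paper supplies in \lemref{RC-proof-4}; as written, the polynomial-witness step is not actually available at the point where you want to evaluate it.
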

Before giving the proof, we describe the idea.  We are after sets of directions that
lead to faithful realizations of Ross-circuits.  By \propref{reflection-22-collapse},
these directions must be non-generic.  A natural way to obtain such a set of directions
is to discard an edge $ij$ from the colored quotient graph,
apply \propref{ross-realizations} to obtain a generic set of directions $\vec d'$ with a
strongly faithful realization $\tilde{G}'(\vec p)$, and then simply set the directions on the
edges in the fiber over $ij$ to be the difference vectors between the points.

\propref{ross-realizations} tells us that this procedure induces a well-defined
direction for the edge $ij$, allowing us to extend $\vec d'$ to $\vec d$ in a controlled way.
However, it does \emph{not} tell us that rank
of $(\tilde{G},\varphi,\vec d)$ will rise when the directions are turned
by angle $\pi/2$, and this seems hard to do directly.  Instead,
we construct a set of directions $\vec d$ so that $(\tilde{G},\varphi,\vec d)$ is rank deficient
and has faithful realizations, and $(\tilde{G},\varphi,\vec d^\perp)$ is generic.
Then we make a perturbation argument to show the existence of a special pair.

The construction we use is, essentially, the one used in the proof of \propref{reflection-22-collapse}
but turned through angle $\pi/2$.  The key geometric insight is that horizontal edge directions
are preserved by the reflection, so the ``gadget'' of a line and its reflection crossing on the $y$-axis, as in
\figref{ref-22-collapse}, degenerates to just a single line.

\subsection{Proof of \propref{ross-circuit-pairs}}
Let $(G,\bgamma)$ be a Ross-circuit; recall that this implies that $(G,\bgamma)$ is a
reflection-Laman graph.

\paragraph{Combinatorial decomposition}
We decompose $(G,\bgamma)$ into a spanning tree $T$ and a
reflection-$(1,1)$ graph $X$ as in \propref{reflection-laman-decomp-lift}.  In particular,
we again have all edges in $T$ colored by the identity.  For now,
we \emph{assume that $X$ is connected}, and we fix $i'j'$ to be an edge that is on the cycle in $X$
with $\gamma_{i'j'}\neq 0$; such an edge must exist by the hypothesis that $X$ is
reflection-$(1,1)$. Let $G' = G\setminus i'j'$.  Furthermore, let $T_0$ and $T_1$
be the two connected components of the lift of $T$.  For a vertex $i \in G$, we denote
the lift in $T_0$ by $i_0$ and the lift in $T_1$ by $i_1$.  We similarly denote the
lifts of $i'$ and $j'$ by $i_0', i_1'$ and $j_0', j_1'$.

\paragraph{Assigning directions}
The assignment of directions is as follows: to the edges of $T$, we assign a direction
$\vec v$ that is neither vertical nor horizontal.  To the edges of $X$
we assign the horizontal direction.  Define the resulting direction network to be
$(\tilde{G},\varphi,\vec d)$, and the direction network induced on the lift of $G'$ to be
$(\tilde{G'},\varphi,\vec d)$.

\paragraph{The realization space of $(\tilde{G},\varphi,\vec d)$}
\figref{ross-circuit-special-pair} contains a schematic picture of the arguments that
follow.
\begin{lemma}\lemlab{RC-proof-1}
The realization space of $(\tilde{G},\varphi,\vec d)$ is $2$-dimensional
and parameterized by exactly one representative in the fiber over the
vertex $i$ selected above.
\end{lemma}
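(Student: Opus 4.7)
The plan is to peel off the constraints coming from $T$ and $X$ separately, show that every realization is determined by a single pair of coordinates, and then verify that the edge $i'j'$ imposes no extra condition.

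First I would use the tree $T$. By \propref{reflection-laman-decomp-lift} (applied to the Ross-circuit, which is reflection-$(2,2)$) the lift of $T$ splits into two connected components $T_0$ and $T_1$, with $T_1$ the reflection of $T_0$. Since every edge of $T$ has been assigned the direction $\vec v$, the constraints coming from $T$-edges together with the symmetry equations force all vertices of $T_0$ to lie on a single affine line $L_0$ parallel to $\vec v$, and all vertices of $T_1$ to lie on the reflected line $L_1 = \Phi(1) L_0$.

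Next I would use $X$. Because we have assumed $X$ to be connected, the same proposition implies that $\tilde X$ is a connected subgraph of $\tilde G$ spanning $V(\tilde G)$. Every edge of $X$ has been assigned the horizontal direction, and since $\Phi(1)$ preserves $y$-coordinates, the constraint attached to each $X$-edge (regardless of its color) reduces to equality of the $y$-coordinates at its endpoints. Combined with the connectedness of $\tilde X$, this forces all vertices of $\tilde G$ to share a common $y$-coordinate $y^\ast$.

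Intersecting the two conditions, since $\vec v$ is neither horizontal nor vertical, the line $L_0$ meets $\{y = y^\ast\}$ in exactly one point $(a, y^\ast)$; hence every vertex of $T_0$ sits at $(a, y^\ast)$, and by reflective symmetry every vertex of $T_1$ sits at $(-a, y^\ast)$. The equation from $i'j'$ is automatically satisfied, because its lift $i'_0 j'_1$ connects $(a, y^\ast)$ to $(-a, y^\ast)$, which is horizontal. Conversely, every $(a, y^\ast) \in \R^2$ yields a valid realization, so the realization space is $2$-dimensional and parameterized exactly by the position $\vec p_{i_0} = (a, y^\ast)$ of a single representative of the fiber over any fixed vertex $i$.

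The step I expect to be most delicate is verifying that $i'j'$ contributes no independent constraint. The key input is the exceptional geometry of the horizontal direction: it is the unique non-vertical direction preserved as a line by $\Phi(1)$, which is precisely why an edge between $T_0$ and $T_1$ becomes free once the common height $y^\ast$ has been fixed.
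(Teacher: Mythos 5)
Your proof is correct and follows essentially the same route as the paper's: the directions on the two lifted copies of $T$ confine the vertices to a line $L$ in direction $\vec v$ and its reflection, the horizontal directions on the connected lift of $X$ confine everything to a single horizontal line, and the resulting intersection point parameterizes the realization space by one representative of a fiber. You are in fact slightly more explicit than the paper in verifying the converse direction --- that every choice of $(a, y^\ast)$ yields a valid realization, with the edge $i'j'$ imposing no further condition --- which is what pins the dimension at exactly $2$.
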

\begin{proof}
In a manner similar to
the proof of \propref{reflection-22-collapse}, the directions on the edges of $T$ force every
vertex to lie either on a line $L$ in the direction $\vec v$ or its reflection.  Since the lift
of $X$ is connected, we further conclude that all the vertices lie on a single horizontal line.
Thus, all the points $\vec p_{j_0}$ are at the intersection of the same horizontal line and $L$
or its reflection. These determine the locations of the $\vec p_{j_1}$, so the
realization space is parameterized by the location of $\vec p_{i'_0}$.
\end{proof}
Inspecting the argument more closely, we find that:
\begin{lemma}
In any realization $\tilde{G}(\vec p)$ of $(\tilde{G},\varphi,\vec d)$,
all the $\vec p_{j_0}$ are equal and all the $\vec p_{j_1}$ are equal.
\end{lemma}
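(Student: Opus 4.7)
The plan is to refine the geometric analysis already set up in the proof of \lemref{RC-proof-1} by observing that the two families of constraints actually cut each fiber down to a single point, not just to a one-dimensional locus. Recall the setup: the edges of $T$ all carry direction $\vec v$, and by \propref{reflection-laman-decomp-lift} the lift of $T$ splits into two disjoint connected components $T_0$ and $T_1$, so every $\vec p_{j_0}$ must lie on the single line $L$ of direction $\vec v$ passing through $\vec p_{i'_0}$, while every $\vec p_{j_1}$ lies on the reflected line $\Phi(1)\cdot L$. On the other hand, the edges of $X$ are all horizontal, and by \propref{reflection-laman-decomp-lift} together with our assumption that $X$ is connected, the lift of $X$ is connected; hence every vertex of $\tilde{G}$ lies on a single horizontal line $H$, which must pass through $\vec p_{i'_0}$.

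The remaining step is essentially a one-line observation: since $\vec v$ is neither horizontal nor vertical, $L$ and $H$ are distinct lines meeting at a unique point, which is $\vec p_{i'_0}$ itself. Therefore $L\cap H = \{\vec p_{i'_0}\}$, and every $\vec p_{j_0}$ must equal $\vec p_{i'_0}$. The equivariance relation $\vec p_{j_1} = \Phi(1)\cdot \vec p_{j_0}$ then collapses all the $\vec p_{j_1}$ onto the single point $\Phi(1)\cdot \vec p_{i'_0}$, giving the conclusion.

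I do not anticipate any real obstacle; the work was already done in \lemref{RC-proof-1}, and the present lemma just reads off a sharper consequence of the same picture. The two-dimensional realization space found in \lemref{RC-proof-1} is not contradicted, since its freedom comes entirely from varying the point $\vec p_{i'_0}\in\R^2$, not from independent motion of the vertex representatives within a fiber. If one later wishes to drop the assumption that $X$ is connected, the argument applies component by component, exactly as in the final paragraph of the proof of \propref{reflection-22-collapse}.
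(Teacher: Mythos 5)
Your proof is correct and follows essentially the same route as the paper's: the lift of $T$ into two components $T_0$, $T_1$ puts all $\vec p_{j_0}$ on $L$ and all $\vec p_{j_1}$ on $\Phi(1)\cdot L$, and intersecting with the single horizontal line forced by the connected lift of $X$ collapses each fiber to one point. You merely make explicit the final intersection step (and the equivariance $\vec p_{j_1}=\Phi(1)\cdot\vec p_{j_0}$) that the paper leaves implicit from \lemref{RC-proof-1}.
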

\begin{proof}
Because the colors on the edges of $T$ are all zero, it lifts to two copies of itself,
one of which spans the vertex set $\{\tilde{j_0} : j\in V(G)\}$ and one which spans
$\{\tilde{j_1} : j\in V(G)\}$.  It follows that in a realization, we have all the
$\vec p_{j_0}$ on $L$ and the $\vec p_{j_1}$ on the
reflection of $L$.
\end{proof}
In particular, because the color $\gamma_{i'j'}$ on the edge $i'j'$ is $1$, we obtain the following.
\begin{lemma}\lemlab{RC-proof-5}
The realization space of $(\tilde{G},\varphi,\vec d)$ contains points
where the fiber over the edge $i'j'$ is not collapsed.
\end{lemma}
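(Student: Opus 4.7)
The plan is to combine the strong structural information from the lemma immediately preceding this one with the two-dimensional parameterization in \lemref{RC-proof-1}, and then to argue by a dimension count that we can place the common image point off the $y$-axis.

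Concretely, I would first observe that by the preceding lemma, in any realization $\tilde{G}(\vec p)$ there is a single point $\vec p\in\R^2$ with $\vec p_{j_0}=\vec p$ and $\vec p_{j_1}=\Phi(1)\cdot\vec p$ for every $j\in V(G)$. Since $\gamma_{i'j'}=1$, the dictionary from the introduction says that the fiber over the edge $i'j'$ consists of the two edges $\tilde{i'}_0\tilde{j'}_1$ and $\tilde{i'}_1\tilde{j'}_0$, whose endpoint sets in any realization are both equal to $\{\vec p,\Phi(1)\cdot\vec p\}$. Consequently the fiber over $i'j'$ is collapsed if and only if $\vec p=\Phi(1)\cdot\vec p$, that is, if and only if $\vec p$ lies on the $y$-axis.

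Next I would invoke \lemref{RC-proof-1}, which states that the realization space is two-dimensional and is parameterized by the single representative $\vec p_{i'_0}=\vec p$. The locus of parameters producing a collapsed fiber over $i'j'$ is precisely the $y$-axis, a one-dimensional linear subspace, and so its complement inside the two-dimensional parameter space is open and nonempty. Any $\vec p$ chosen from this complement determines a realization of $(\tilde{G},\varphi,\vec d)$ in which the fiber over $i'j'$ is not collapsed, which is what is claimed.

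The substantive geometric work has already been done in the two preceding lemmas; what remains is only the routine verification that the reflection $\Phi(1)$ through the $y$-axis fixes only the $y$-axis itself, so the collapse condition $\vec p=\Phi(1)\cdot\vec p$ defines a proper subspace of the two-dimensional parameter space. I do not anticipate any real obstacle.
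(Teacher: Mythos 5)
Your argument is correct and matches the paper's (implicit) reasoning: the paper derives this lemma directly from the preceding one, noting that since $\gamma_{i'j'}=1$ the lifted edges of $i'j'$ join the common point $\vec p_{j_0'}$ on $L$ to its mirror image, which are distinct whenever the parameter point $\vec p_{i'_0}$ from \lemref{RC-proof-1} is chosen off the $y$-axis. Your dimension count making this explicit is a harmless elaboration, not a different approach.
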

\begin{figure}[htbp]
\centering
\includegraphics[width=.3\textwidth]{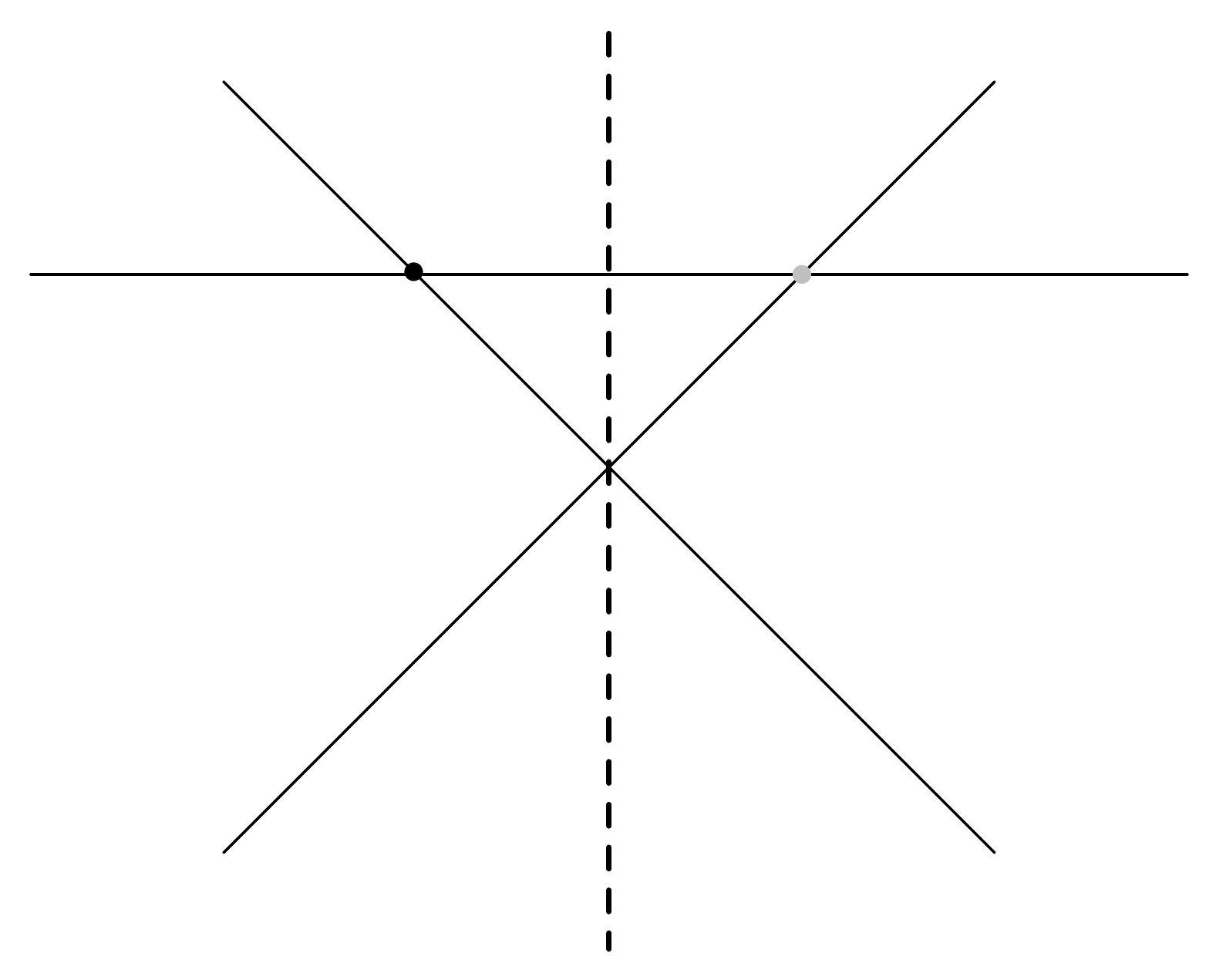}
\caption{Schematic of the proof of \propref{ross-circuit-pairs}: the $y$-axis
is shown as a dashed line.  The directions on the edges of the lift of the tree
$T$ force all the vertices to be on one of the two lines meeting at the $y$-axis.
The horizontal directions on the connected reflection-$(1,1)$ graph $X$ force the
point $\vec p_{j_0}$ to be at the intersection marked by the black dot and
$\vec p_{j_1}$ to be at the intersection marked by the gray one.}
\label{fig:ross-circuit-special-pair}
\end{figure}
\paragraph{The realization space of $(\tilde{G}',\varphi,\vec d)$}
The conclusion of \lemref{RC-proof-1} implies that the realization
system for $(\tilde{G},\varphi,\vec d)$ is rank deficient by one.
Next we show that removing the edge $i'j'$ results in a
direction network that has full rank on the colored graph $(G',\bgamma)$.
\begin{lemma}\lemlab{RC-proof-2}
The realization space of $(\tilde{G},\varphi,\vec d)$ is canonically identified
with that of $(\tilde{G}',\varphi,\vec d)$.
\end{lemma}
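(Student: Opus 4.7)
The inclusion $\mathcal{R}(\tilde{G},\varphi,\vec d) \subseteq \mathcal{R}(\tilde{G}',\varphi,\vec d)$ is automatic, since the realization system for $\tilde{G}'$ is obtained from that of $\tilde{G}$ by dropping only the two equations indexed by the fiber over $i'j'$. The real content is the reverse inclusion, and my plan is to rerun the argument of \lemref{RC-proof-1} verbatim on $\tilde{G}'$ and show that even after deleting the fiber over $i'j'$ the realization space is still confined to the same 2-parameter family in which all $\vec p_{k_0}$ coincide at a common point $\vec p_* = (x_*, y_*)$ and all $\vec p_{k_1}$ coincide at $\Phi(1) \cdot \vec p_* = (-x_*, y_*)$. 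For any such configuration, $\vec p_{j'_1} - \vec p_{i'_0} = (-2 x_*, 0)$ is horizontal, matching $\vec d_{i'j'}$, so the missing equation on $i'_0 j'_1$ holds automatically (and by reflective symmetry so does the one on $i'_1 j'_0$), yielding equality of realization spaces.

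The combinatorial heart of the argument is the analysis of the lift of $X' := X \setminus i'j'$. Because $X$ is a connected map-graph with $\rho(X) \neq 0$, and $i'j'$ is chosen to lie on its unique cycle with $\gamma_{i'j'} = 1$, $X'$ is a spanning tree of $X$ with trivial $\rho$-image. Hence the lift of $X'$ splits into exactly two connected components, each a copy of $X'$ whose vertex set projects bijectively onto $V(X')$ under the covering map. I will then combine: (i) the horizontal direction assigned to every edge of $X'$ forces each lifted component to lie on a single horizontal line; (ii) reflective symmetry equates the $y$-coordinate of $k_0$ and $k_1$ for every $k \in V(G)$, and since each vertex fiber contributes exactly one point to each component, the two horizontal lines must coincide. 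Thus all $2n$ vertices of $\tilde{G}$ end up on a single horizontal line $H$.

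Once every vertex sits on $H$, the rest of \lemref{RC-proof-1} transfers unchanged: the lifted tree $T_0$ pins $\{\vec p_{k_0} : k \in V(G)\}$ to a line $L$ of direction $\vec v$; $L \cap H$ is a single point $\vec p_*$; and reflection sends each $\vec p_{k_1}$ to $\Phi(1) \cdot \vec p_*$. Substituting this configuration into the two dropped edge equations verifies them, completing the reverse inclusion.

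The main obstacle is the $\rho$-bookkeeping in the second paragraph: I need to be careful to confirm that $X'$ is genuinely a tree of trivial $\rho$-image whose two lifted components each meet every vertex fiber exactly once. Without this, the two horizontal-line conclusions cannot be merged via reflective symmetry, and the realization space on $\tilde{G}'$ could a priori be strictly larger than on $\tilde{G}$. The key bit of bookkeeping is that the unique cycle of $X$ has $\rho$-value $1$, so deleting any one of its color-$1$ edges (rather than some color-$0$ edge) correctly trivializes $\rho(X')$ and gives the desired splitting of the lift.
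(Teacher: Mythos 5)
Your proof is correct and follows essentially the same route as the paper: observe that $X' = X\setminus i'j'$ is a tree, so its lift has two connected components, and since horizontal lines are preserved by the reflection both components land on the same horizontal line, whence the structure of the realization space from \lemref{RC-proof-1} persists and the dropped equation is automatically satisfied. The paper states this more tersely (concluding that the equation for $i'j'$ was dependent), while you spell out the $\rho$-bookkeeping and the verification of the dropped equation explicitly; the content is the same.
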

\begin{proof}
In the proof of \lemref{RC-proof-1}, that $X$ lifts to a connected subgraph of $\tilde{G}$
was not essential.  Because a horizontal line is preserved by the reflection,
realizations will take on the same structure provided that $X$ lifts to a subgraph
with two connected components.  Removing $i'j'$
from $X$ leaves a graph $X'$ with this property since $X'$ is a tree.

It follows that the equation corresponding to the edge $i'j'$ in \eqref{colored-system}
was dependent.
\end{proof}

\paragraph{The realization space of $(\tilde{G},\varphi,\vec d^\perp)$}
Next, we consider what happens when we turn all the directions by $\pi/2$.
\begin{lemma}\lemlab{RC-proof-3}
The realization space of $(\tilde{G},\varphi,\vec d^\perp)$ has only collapsed solutions.
\end{lemma}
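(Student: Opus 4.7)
The plan is to observe that $\vec d^\perp$ is, edge-for-edge, precisely the direction assignment constructed in the proof of \propref{reflection-22-collapse}, so the collapse argument from that proof can be replayed verbatim. Indeed, $\vec d^\perp$ assigns the direction $\vec v^\perp$ to every edge of $T$, which is still neither horizontal nor vertical, and assigns the vertical direction to every edge of $X$ (since horizontal and vertical are $\perp$-partners). A Ross-circuit is reflection-Laman and in particular reflection-$(2,2)$, so the whole framework of \propref{reflection-22-collapse} applies. Moreover, the decomposition $T \cup X$ we are using is exactly the one provided by \propref{reflection-laman-decomp-lift}, so the lift $\tilde{T}$ splits into two disjoint copies $T_0, T_1$ spanning the two fibers, and the lift $\tilde{X}$ is connected (because the single connected component of $X$, which is a reflection-$(1,1)$ graph with non-trivial $\rho$-image, lifts to a connected subgraph).

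Given this, I would run the geometric argument in two stages. First, the connectivity of each of $T_0$ and $T_1$, together with the common direction $\vec v^\perp$ on all edges of $\tilde{T}$, forces every $\vec p_{j_0}$ to lie on one line $L$ of direction $\vec v^\perp$ and every $\vec p_{j_1}$ to lie on its reflection $L'$. Second, the connected spanning subgraph $\tilde{X}$ has all edges vertical, so every pair of vertices in $\tilde{G}$ shares an $x$-coordinate. The reflection symmetry $\vec p_{i_1} = \Phi(1)\cdot \vec p_{i_0}$ then forces this common $x$-coordinate to equal its own negative, so every vertex lies on the $y$-axis.

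Combining the two constraints, every $\vec p_{j_0}$ lies at $L \cap \{y\text{-axis}\}$ and every $\vec p_{j_1}$ lies at $L' \cap \{y\text{-axis}\}$; each of these is a single point, and by reflection symmetry the two points coincide. Hence every vertex of $\tilde{G}$ is realized at one and the same point, which is the definition of a collapsed realization, as in the schematic of \figref{ref-22-collapse}.

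There is no real obstacle, since this is essentially a re-use of the gadget from \propref{reflection-22-collapse}; the only points that require explicit verification are that $\vec v^\perp$ remains non-horizontal and non-vertical (immediate) and that perpendicularly rotating the horizontal directions gives vertical directions that are still preserved by $\Phi$ (also immediate, since the $y$-axis is the reflection axis). The key geometric insight, already highlighted in the discussion preceding \propref{ross-circuit-pairs}, is that here the ``gadget'' does collapse rather than leave a one-parameter family, because the vertical direction along $\tilde{X}$ is preserved by $\Phi$ and forces alignment with the reflection axis.
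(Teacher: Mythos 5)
Your proposal is correct and is exactly the paper's argument: the paper's proof of \lemrefX{RC-proof-3} consists of the single observation that $\vec d^\perp$ (tree edges in a non-horizontal, non-vertical direction $\vec v^\perp$, edges of $X$ vertical) is precisely the construction used to prove \propref{reflection-22-collapse}, whose collapse argument then applies since a Ross-circuit is reflection-$(2,2)$. You have merely written out in full the details the paper leaves implicit, and all of those details check out.
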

\begin{proof}
This is exactly the construction used to prove \propref{reflection-22-collapse}.
\end{proof}

\paragraph{Perturbing $(\tilde{G},\varphi,\vec d)$}
To summarize what we have shown so far:
\begin{itemize}
\item[(a)] $(\tilde{G},\varphi,\vec d)$ has a $2$-dimensional realization space parameterized
by $\vec p_{i'_0}$ and identified with that of a full-rank direction network on  the Ross graph
$(G',\bgamma)$.
\item[(b)] There are points $\tilde{G}(\vec p)$ in this realization space where
$\vec p_{i'_0}\neq \vec p_{j'_1}$.
\item[(c)] $(\tilde{G},\varphi,\vec d)$ has a $1$-dimensional realization space containing only collapsed
solutions.
\end{itemize}
What we have not shown is that the realization space of $(\tilde{G},\varphi,\vec d)$
has \emph{faithful} realizations, since the ones we constructed all have many
coincident vertices.  \propref{ross-realizations} will imply the rest of the
theorem, provided that the above properties hold for any small perturbation of
$\vec d$, since some small perturbation of \emph{any} assignment of directions
to the edges of $(G',\bgamma)$ has only faithful realizations.
\begin{lemma}\lemlab{RC-proof-4}
Let $\vec{ \hat d'}$ be a perturbation of the directions $\vec d'$ on the edges of $G'$.  If $\vec{ \hat d'}$
is sufficiently close to $\vec d'$ ,
then there are realizations of the direction network
$(\tilde{G}',\varphi,\vec{ \hat d'})$ such that $\vec p_{i'_0}\neq \vec p_{j'_1}$.
\end{lemma}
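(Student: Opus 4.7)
The plan is a continuity argument exploiting that the rank of the realization system \eqref{colored-system} for $(\tilde G',\varphi,\vec d')$ already equals the generic rank on $(G',\bgamma)$. The first observation is that $(G',\bgamma)=(G\setminus i'j',\bgamma)$ is a Ross graph, since $(G,\bgamma)$ is a Ross-circuit; hence by \propref{ross-realizations} a generic direction network on $(\tilde G',\varphi)$ has a $2$-dimensional realization space. By \lemref{RC-proof-2} the realization space of $(\tilde G',\varphi,\vec d')$ is also $2$-dimensional, so $\vec d'$ attains this generic rank.

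The coefficient matrix of \eqref{colored-system} depends linearly on the edge directions, and rank is lower semicontinuous, so there is an open neighborhood $U$ of $\vec d'$ on which the rank stays constant. Consequently $\mathcal R(\tilde G',\varphi,\vec{\hat d'})$ is a $2$-dimensional linear subspace for every $\vec{\hat d'}\in U$, and the family of kernels varies continuously in the Grassmannian. In particular, one may select a basis $\{\vec q^{(1)}(\vec{\hat d'}),\vec q^{(2)}(\vec{\hat d'})\}$ of $\mathcal R(\tilde G',\varphi,\vec{\hat d'})$ that depends continuously on $\vec{\hat d'}\in U$.

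Now consider the linear functional $\ell(\vec p)=\vec p_{i'_0}-\vec p_{j'_1}$. By \lemref{RC-proof-5} the realization space $\mathcal R(\tilde G',\varphi,\vec d')$ contains a point $\vec p^{*}$ with $\ell(\vec p^{*})\neq 0$, so $\ell$ must be nonzero on at least one of the basis vectors $\vec q^{(k)}(\vec d')$. By continuity of the basis and of $\ell$, the scalar $\ell(\vec q^{(k)}(\vec{\hat d'}))$ remains nonzero for $\vec{\hat d'}$ in a possibly smaller neighborhood of $\vec d'$, which furnishes a realization with $\vec p_{i'_0}\neq \vec p_{j'_1}$. The one step that requires care is the continuous choice of basis across the constant-rank family, but this is a standard fact of linear algebra (it follows, for instance, from continuity of the orthogonal projector onto the kernel of a matrix of locally constant rank); once granted, everything else is a direct continuity argument applied to a single linear functional.
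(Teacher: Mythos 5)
Your proof is correct and takes essentially the same route as the paper's: both are continuity arguments resting on the facts that $\vec d'$ already attains the full (generic) rank of the realization system on the Ross graph $G'$, so the solution space varies continuously with the directions, and that \lemref{RC-proof-5} supplies a realization with $\vec p_{i'_0}\neq\vec p_{j'_1}$ to start from. You spell out the constant-rank/continuous-kernel step explicitly, which the paper compresses into the remark that the realization space is parameterized by $\vec p_{i'_0}$, but the underlying argument is the same.
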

\begin{proof}
The realization space is parameterized by $\vec p_{i'_0}$, and so $\vec p_{j'_1}$ varies
continuously with the directions on the edges and $\vec p_{i'_0}$.  Since there are realizations of $(\tilde{G}', \varphi, \vec d)$
with $\vec p_{i_0} \neq \vec p_{j_1}$, the Lemma follows.
\end{proof}
\lemref{RC-proof-4} implies that any sufficiently small perturbation of the directions assigned to the
edges of $G'$ gives a direction network that induces a well-defined direction on the edge $i'j'$ which
is itself a small perturbation of $\vec d_{i'j'}$.  Since the ranks of $(\tilde{G'},\varphi,\vec d')$
and  $(\tilde{G},\varphi,\vec d^\perp)$ are stable under small perturbations, this implies that
we can perturb $\vec d'$ to a $\vec{\hat d'}$ that is generic in the sense of \propref{ross-realizations},
while preserving faithful realizability of $(\tilde{G},\varphi,\hat{\vec d})$ and full rank of the
realization system for $(\tilde{G},\varphi,\hat{\vec d}^\perp)$.  The Proposition is proved for when $X$ is
connected.

\paragraph{$X$ need not be connected}
The proof is then complete once we remove the additional assumption that $X$ was
connected.  Let $X$ have connected
components $X_1, X_2,\ldots,X_c$.  For each of the $X_i$, we can identify an edge
$(i'j')_k$ with the same properties as $i'j'$ above.

Assign directions to the tree $T$ as above.
For $X_1$, we assign directions exactly as above.  For each of the $X_k$ with $k\ge 2$,
we assign the edges of $X_k\setminus (i'j')_k$ the horizontal direction and $(i'j')_k$
a direction that is a small perturbation of horizontal.

With this assignment $\vec d$ we see that
for any realization of $(\tilde{G},\varphi,\vec d)$, each of the $X_k$, for $k\ge 2$
is realized as completely collapsed to a single point
at the intersection of the line $L$ and the $y$-axis.  Moreover,
in the direction network on $\vec d^\perp$, the directions on these $X_i$ are a small
perturbation of the ones used on $X$ in the proof of \propref{reflection-22-collapse}.
From this is follows that, in any realization $(\tilde{G},\varphi,\vec d^\perp)$,
is completely collapsed and hence full rank.

We now see that this new set of directions has properties (a), (b), and (c) above
required for the perturbation argument.  Since that argument makes no reference
to the decomposition, it applies verbatim to the case where $X$ is disconnected.
\eop

\subsection{Proof of \theoref{direction-network}}
The easier direction to check is necessity.
\paragraph{The Maxwell-direction}
If $(G,\bgamma)$ is not reflection-Laman, then it contains either a
Laman-circuit with trivial $\rho$-image, or a violation of $(2,1)$-sparsity.  If there is a
Laman-circuit with trivial $\rho$-image, the Parallel Redrawing Theorem \cite[Theorem 4.1.4]{W96}
in the form \cite[Theorem 3]{ST10}  implies that this subgraph has no faithful realizations
for
$(G,\varphi,\vec d)$ only if it does in $(G,\varphi,\vec d^\perp)$ if rank-deficient.
A violation of $(2,1)$-sparsity implies that the realization system \eqref{colored-system}
of $(\tilde{G},\varphi,\vec d^\perp)$ has a dependency, since the realization space is always at
least $1$-dimensional.

\paragraph{The Laman direction}
Now let $(G,\bgamma)$ be a reflection-Laman graph and let $(G',\bgamma)$
be a Ross-basis of $(G,\bgamma)$.  For any edge $ij \notin G'$, adding it to
$G'$ induces a Ross-circuit which contains some edge $i'j'$ having the property
specified in \propref{ross-circuit-pairs}.  Note that $G' - ij +i'j'$ is again a Ross-basis.
We therefore can assume (after edge-swapping in this manner) for all $ij \notin G'$ that
$ij$ has the property from \propref{ross-circuit-pairs} in the Ross-circuit it induces.

We assign directions $\vec d'$ to the edges of $G'$ such that:
\begin{itemize}
\item The directions on each of the intersections of the Ross-circuits with $G'$ are generic in the sense
of \propref{ross-circuit-pairs}.
\item The directions on the edges of $G'$ that remain in the reduced graph $(G^*,\bgamma)$
are perpendicular to an assignment of directions on $G^*$ that is
generic in the sense of \propref{reflection-22-collapse}.
\item The directions on the edges of $G'$ are generic in the sense of \propref{ross-realizations}.
\end{itemize}
This is possible because the set of disallowed directions is the union of a finite number of
proper algebraic subsets in the space of direction assignments.  Extend to directions $\vec d$ on $G$
by assigning directions to the remaining edges as specified by \propref{ross-circuit-pairs}.  By construction,
we know that:
\begin{lemma}\lemlab{laman-1}
The direction network $(\tilde{G},\varphi,\vec d)$ has faithful realizations.
\end{lemma}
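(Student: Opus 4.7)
The plan is to show that the strongly faithful realization of the Ross-basis $(\tilde{G}',\varphi,\vec d')$ supplied by \propref{ross-realizations} automatically satisfies the direction equation on every non-basis edge, because the directions $\vec d_{ij}$ for $ij\notin G'$ were defined via \propref{ross-circuit-pairs} to be precisely the directions that arise in that realization between the fibers over $i$ and $j$. Once this compatibility is in hand, the realization of $(\tilde{G}',\varphi,\vec d')$ is already a realization of $(\tilde{G},\varphi,\vec d)$, and strong faithfulness on $G'$ yields faithfulness on $G$ since $G$ and $G'$ share the same vertex set.

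Concretely, I first invoke \propref{ross-realizations} on the directions $\vec d'$ assigned to $G'$ (which are generic in the sense of that proposition by construction) to produce a strongly faithful realization $\tilde{G}'(\vec p)$, unique up to scaling and vertical translation. Fix any non-basis edge $ij\notin G'$, and let $C_{ij}\subseteq G'+ij$ be the Ross-circuit it induces. The edge-swapping performed just before the construction of $\vec d$ ensures that $ij$ plays the role of the ``special'' edge of \propref{ross-circuit-pairs} inside $C_{ij}$, and the restriction of $\vec d'$ to $C_{ij}-ij$ is generic in the sense of that proposition. The restriction of $\vec p$ to the fibers over $C_{ij}-ij$ is a realization of the corresponding sub-direction network, so \propref{ross-circuit-pairs} applies and tells us that the direction between $i$ and $j$ produced by any realization in this $2$-dimensional solution space is the well-defined vector $\vec d_{ij}$. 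This is exactly the direction assigned to $ij$ in $\vec d$, so $\vec p$ satisfies \eqref{colored-system} on the edge $ij$. Since this holds for every $ij\notin G'$, the realization $\vec p$ extends without modification to a realization of the full $(\tilde{G},\varphi,\vec d)$, and strong faithfulness of $\vec p$ on the common vertex set implies faithfulness on $\tilde{G}$.

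The only point requiring care is the identification between the ``induced direction'' of \propref{ross-circuit-pairs}, defined from the $2$-dimensional local realization space of the Ross graph $C_{ij}-ij$, and the direction actually produced between $\vec p_i$ and $\Phi(\gamma_{ij})\vec p_j$ by the global realization of $\tilde{G}'$. This is handled by the well-definedness clause of \propref{ross-circuit-pairs}: every realization in the local solution space yields the same direction between the fibers over $i$ and $j$, so in particular the restriction of $\vec p$ does; no additional compatibility check is needed, and the lemma follows.
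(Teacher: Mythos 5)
Your proposal is correct and follows the same route as the paper: the paper's proof simply asserts that the realization space of $(\tilde{G},\varphi,\vec d)$ is identified with that of $(\tilde{G'},\varphi,\vec d')$ and then invokes \propref{ross-realizations}, and your argument is exactly the expanded justification of that identification, using the well-definedness clause of \propref{ross-circuit-pairs} on each induced Ross-circuit to check that the equations on the non-basis edges are automatically satisfied. No gap; you have just made explicit what the paper leaves implicit.
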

\begin{proof}
The realization space is identified with that of $(\tilde{G'},\varphi,\vec d')$, and $\vec d'$
is chosen so that \propref{ross-realizations} applies.
\end{proof}
\begin{lemma}\lemlab{laman-2}
In any realization of $(\tilde{G},\varphi,\vec d^{\perp})$, the Ross-circuits are realized with all their
vertices coincident and on the $y$-axis.
\end{lemma}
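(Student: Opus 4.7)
The plan is to reduce the statement to what we already know about Ross-circuits via \propref{ross-circuit-pairs}, together with the fact that the Ross-circuits of $(G,\bgamma)$ are vertex-disjoint.

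First I would use \propref{ross-circuit-decomp} to note that the Ross-circuits $G_1,\ldots,G_t$ in $(G,\bgamma)$ are vertex-disjoint, so we may analyze each of them separately. Next, recall how $\vec d$ was constructed: on each intersection $G_k \cap G'$ the directions were chosen generically in the sense of \propref{ross-circuit-pairs}, and the remaining edge $i'j'$ of $G_k$ was assigned the direction that \propref{ross-circuit-pairs} forces on it. By that proposition, the pair $(\tilde{G}_k,\varphi,\vec d|_{G_k})$, $(\tilde{G}_k,\varphi,\vec d|_{G_k}^\perp)$ is a special pair. In particular, the perpendicular network $(\tilde{G}_k,\varphi,\vec d|_{G_k}^\perp)$ has only collapsed realizations.

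The second step is to observe that any realization $\tilde{G}(\vec p)$ of $(\tilde{G},\varphi,\vec d^\perp)$ restricts, by simply forgetting the vertices and equations outside $G_k$, to a realization of $(\tilde{G}_k,\varphi,\vec d|_{G_k}^\perp)$: the equations \eqref{colored-system} for edges inside $G_k$ depend only on the coordinates of vertices of $\tilde{G}_k$, and the $\Z/2\Z$-equivariance of $\vec p$ passes to the restriction. By the previous paragraph this restriction must be collapsed, so all vertices of $\tilde{G}_k$ are realized at a single point $\vec q_k \in \R^2$.

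Finally, because $\varphi$ acts freely on $V(\tilde{G}_k)$ and each vertex $\tilde{i}_0$ has its partner $\tilde{i}_1 = \Phi(1)\cdot \tilde{i}_0$ also in $V(\tilde{G}_k)$, the common image point satisfies $\vec q_k = \Phi(1)\cdot \vec q_k$, hence is fixed by the reflection through the $y$-axis and so lies on the $y$-axis. Applying this to each $k = 1,\ldots,t$ gives the conclusion. The only subtle point, which the restriction argument handles cleanly, is confirming that the ``only collapsed'' conclusion from \propref{ross-circuit-pairs} survives when the Ross-circuit sits inside the larger graph $(G,\bgamma)$; vertex-disjointness of the Ross-circuits is what makes this automatic.
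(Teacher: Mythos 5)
Your proof is correct and takes essentially the same approach as the paper: the paper's one-line proof simply cites the choice of $\vec d$ together with \propref{ross-circuit-pairs}, and your argument fills in exactly the intended details (restricting a realization to each vertex-disjoint Ross-circuit, collapsing it via the special-pair property, and locating the collapsed point on the $y$-axis via the free $\Z/2\Z$-symmetry).
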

\begin{proof}
This follows from how we chose $\vec d$ and \propref{ross-circuit-pairs}.
\end{proof}
As a consequence of \lemref{laman-2}, and the fact that we picked $\vec d$ so that $\vec d^\perp$
extends to a generic assignment of directions $(\vec d^*)^\perp$ on the reduced graph $(G^*,\bgamma)$
we have:
\begin{lemma}
The realization space of $(\tilde{G},\varphi,\vec d^\perp)$ is identified with that of
$(\tilde{G^*},\varphi,\vec (d^*)^\perp)$ which, furthermore, contains only collapsed solutions.
\end{lemma}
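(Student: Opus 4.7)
The plan is to build a canonical bijection between the realization spaces of $(\tilde{G},\varphi,\vec d^\perp)$ and $(\tilde{G^*},\varphi,(\vec d^*)^\perp)$, and then conclude by applying \propref{reflection-22-collapse} to the reduced graph, which is reflection-$(2,2)$ by \propref{reduced-graph}.

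For the forward map, \lemref{laman-2} tells us that in every realization of $(\tilde{G},\varphi,\vec d^\perp)$, each Ross-circuit $G_i$ is collapsed onto a single point $q_i$ lying on the $y$-axis. I send such a realization to the one obtained from $\tilde{G^*}$ by placing both lifts of the contracted vertex $v_i$ at $q_i$ (which equals its own reflection since it lies on the $y$-axis) and leaving the other vertices in place. The constraints on edges of $G^*$ coming from $G$ are already satisfied, while the newly added color-$1$ self-loop at $v_i$ imposes $\iprod{\Phi(1)q_i - q_i}{(\vec d^*)^\perp_{\mathrm{loop}}} = 0$, which holds trivially since $\Phi(1)q_i = q_i$. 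For the inverse, given a realization of $(\tilde{G^*},\varphi,(\vec d^*)^\perp)$, the color-$1$ self-loop at each $v_i$ forces the horizontal vector $\Phi(1)q_i - q_i$ to be perpendicular to the generic direction $(\vec d^*)^\perp_{\mathrm{loop}}$; since that direction is non-vertical, we conclude $q_i$ lies on the $y$-axis. Placing every vertex of $G_i$ (and its $\varphi$-image) at $q_i$ then produces a realization of $(\tilde{G},\varphi,\vec d^\perp)$: the edges interior to each Ross-circuit are trivially satisfied, and the remaining constraints are exactly the ones inherited from $(\tilde{G^*},\varphi,(\vec d^*)^\perp)$. These two maps are evidently mutually inverse.

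To finish, by the construction of $\vec d$ preceding \lemref{laman-1}, the directions $(\vec d^*)^\perp$ on the non-self-loop edges of $G^*$ agree with a generic assignment in the sense of \propref{reflection-22-collapse}, and the color-$1$ self-loops inherit generic directions from the same choice; thus $(\vec d^*)^\perp$ is generic on the reflection-$(2,2)$ graph $(G^*,\bgamma)$. \propref{reflection-22-collapse} then forces every realization of $(\tilde{G^*},\varphi,(\vec d^*)^\perp)$ to be collapsed, and the bijection just constructed transports this conclusion back to $(\tilde{G},\varphi,\vec d^\perp)$. The main subtlety is that the newly introduced color-$1$ self-loops encode geometrically the condition that each collapsed Ross-circuit sits on the $y$-axis; verifying the inverse direction of the bijection depends precisely on this observation, which is what makes the reduced graph faithfully absorb the behavior of the Ross-circuits.
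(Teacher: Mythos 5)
Your proof is correct and follows essentially the same route as the paper: the paper likewise invokes \lemref{laman-2} to collapse each Ross-circuit onto the $y$-axis, observes that a generically-directed color-$1$ self-loop forces exactly the condition of lying collapsed on the $y$-axis (so contracting a Ross-circuit to a looped vertex gives isomorphic realization spaces), and then applies \propref{reflection-22-collapse} to the reduced graph via \propref{reduced-graph}. You have merely written out the two directions of the identification explicitly, which the paper leaves as an observation.
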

Observe that a direction network for a single self-loop (colored $1$) with a generic direction only has solutions where
vertices are collapsed and on the $y$-axis.  Consequently, replacing a Ross-circuit with a single vertex
and a self-loop yields isomorphic realization spaces.  Since the reduced graph is reflection-$(2,2)$ by \propref{reduced-graph} and the directions
assigned to its edges were chosen generically for \propref{reflection-22-collapse},
that $(\tilde{G},\varphi,\vec d^\perp)$ has only collapsed solutions follows.
Thus, we have exhibited a special pair, completing the proof.
\eop

\paragraph{Remark} It can be seen that the realization space of a direction network as supplied by
\theoref{direction-network} has at least one degree of freedom for each edge that is not in a Ross basis.  Thus,
the statement cannot be improved to, e.g., a unique realization up to translation and scale.

\section{Infinitesimal rigidity of reflection frameworks} \seclab{reflection-laman-proof}
Let $(\tilde{G},\varphi,\bm{\ell})$ be a reflection framework and let $(G,\bgamma)$
be the quotient graph.  The configuration space, which is the set of
solutions to the quadratic system \eqref{lengths-1}--\eqref{lengths-2} is canonically
identified with the solutions to:
\begin{eqnarray}\eqlab{colored-lengths}
||\Phi(\gamma_{ij})\cdot \vec p_j - \vec p_i||^2 = \ell^2_{ij} & \qquad
\text{for all edges $ij\in E(G)$}
\end{eqnarray}
where $\Phi$ acts on the plane by reflection through the $y$-axis.  (That ``pinning down''
$\Phi$ does not affect the theory is straightforward from the definition of the configuration
space: it simply removes rotation and translation in the $x$-direction from the
set of trivial motions.)

\subsection{Infinitesimal rigidity}
Computing the formal differential of \eqref{colored-lengths}, we obtain the system
\begin{eqnarray}\eqlab{colored-inf}
\iprod{\Phi(\gamma_{ij})\cdot \vec p_j - \vec p_i}{\vec v_j - \vec v_i} = 0 & \qquad
\text{for all edges $ij\in E(G)$}
\end{eqnarray}
where the unknowns are the \emph{velocity vectors} $\vec v_i$.  A standard kind of result (cf. \cite{AR78})
is the following.
\begin{prop}\proplab{ar-direction}
Let $\tilde{G}(\vec p,\Phi)$ be a realization of an abstract framework $(\tilde{G},\varphi,\bm{\ell})$.
If the corank of the system \eqref{colored-inf} is one, then $\tilde{G}(\vec p)$ is rigid.
\end{prop}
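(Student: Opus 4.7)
The plan is to follow the classical Asimow-Roth argument, adapted to the reflection-symmetric (pinned) setting described by the colored equations \eqref{colored-lengths}. The first step is to identify the space of trivial infinitesimal motions of $\tilde{G}(\vec p,\Phi)$. Because we have pinned $\Phi$ to be reflection through the $y$-axis, a Euclidean isometry $T$ preserves the class of reflection-symmetric realizations if and only if $T$ commutes with $\Phi$. A direct calculation (writing $T(z) = az + b$ or $T(z) = a\bar z + b$ in complex coordinates and imposing $T\Phi = \Phi T$) shows that the identity component of the centralizer of $\Phi$ in $\Euc(2)$ is exactly the $1$-parameter group of translations along the $y$-axis. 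Differentiating, the space of trivial infinitesimal motions is the $1$-dimensional subspace of velocity fields $\vec v_i = (0,c)$, and these visibly lie in the kernel of \eqref{colored-inf} since $\vec v_j - \vec v_i = 0$.

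Next, I would interpret the system \eqref{colored-inf} as the differential at $\vec p$ of the rigidity map
\begin{equation*}
f \colon \R^{2n} \to \R^{m}, \qquad f(\vec p) \;=\; \bigl(\,\|\Phi(\gamma_{ij})\cdot\vec p_j - \vec p_i\|^2\,\bigr)_{ij\in E(G)},
\end{equation*}
where $n = |V(G)|$ and $m = |E(G)|$; the coefficient matrix of \eqref{colored-inf} is, up to a factor of $2$, the Jacobian $df_{\vec p}$. Since trivial motions always contribute a $1$-dimensional kernel, the hypothesis that the corank of \eqref{colored-inf} equals $1$ says exactly that every infinitesimal motion of $\tilde{G}(\vec p,\Phi)$ is trivial, and equivalently that $df_{\vec p}$ has maximal possible rank $2n - 1$.

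The final step is the standard implicit function theorem argument. Since $\vec p$ is a regular point of $f$ (of rank $2n-1$), the level set $V = f^{-1}(f(\vec p))$, which is canonically identified with the configuration space $\mathcal{C}(\tilde{G},\varphi,\bm{\ell})$ near $\vec p$, is a smooth $1$-dimensional submanifold of $\R^{2n}$ in a neighborhood of $\vec p$. The orbit of $\vec p$ under the $1$-parameter subgroup of $y$-translations is a smooth $1$-dimensional submanifold contained in $V$. Matching dimensions, the orbit locally exhausts $V$, so $\vec p$ is isolated in $\mathcal{C}(\tilde{G},\varphi,\bm{\ell})$; that is, $\tilde{G}(\vec p,\Phi)$ is rigid.

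This proof has no real obstacle; the content is just the classical Asimow-Roth implication from infinitesimal to local rigidity at a regular point. The only subtlety worth flagging is the computation of the centralizer of $\Phi$ to justify that the trivial-motion kernel is exactly $1$-dimensional, so that the numerical condition ``corank $=1$'' aligns with ``every infinitesimal motion is trivial''; once that is in hand, the implicit function theorem does all the remaining work.
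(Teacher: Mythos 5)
Your proof is correct and is precisely the argument the paper intends: the paper offers no proof of this proposition at all, simply citing Asimow--Roth \cite{AR78} as a ``standard kind of result,'' and your write-up supplies exactly that standard maximal-rank/implicit-function-theorem argument, together with the one adaptation the symmetric setting requires (computing that after pinning $\Phi$ the residual trivial motions are the $y$-translations, so corank one means every infinitesimal motion is trivial). Nothing is missing.
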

Thus, we define a realization to be \emph{infinitesimally rigid} if the system \eqref{colored-inf}
has maximal rank, and \emph{minimally infinitesimally rigid} if it is infinitesimally rigid but ceases to
be so after removing any edge from the colored quotient graph.

By definition, infinitesimal rigidity is defined by a polynomial condition in the coordinates of
the points $\vec p_i$, so it is a generic property associated with the colored graph $(G,\bgamma)$.

\subsection{Relation to direction networks}
Here is the core of the direction network method for reflection frameworks:
we can understand the rank of \eqref{colored-inf} in terms of a direction network.
\begin{prop}\proplab{rigidity-vs-directions}
Let $\tilde{G}(\vec p,\Phi)$ be a realization of a reflection framework with $\Phi$
acting by reflection through the $y$-axis.  Define the direction $\vec d_{ij}$ to be
$\vec \Phi(\gamma_{ij})\cdot \vec p_j - \vec p_i$.  Then the rank of \eqref{colored-inf}
is equal to that of \eqref{colored-system} for the direction network
$(G,\bgamma,\vec d^{\perp})$.
\end{prop}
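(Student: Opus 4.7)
The plan is a direct row-by-row comparison of the two linear systems, showing that they coincide once the conventions are unpacked.

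First I would write out the rigidity equation \eqref{colored-inf} for a colored edge $ij$ by differentiating the symmetric length constraint $\|\Phi(\gamma_{ij})\vec p_j-\vec p_i\|^2=\ell_{ij}^2$; this gives, up to a factor of two, the scalar equation
\[
\iprod{\Phi(\gamma_{ij})\vec p_j-\vec p_i}{\Phi(\gamma_{ij})\vec v_j-\vec v_i}=0,
\]
which is one row of the colored rigidity matrix.

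Next I would rewrite the direction network equation \eqref{colored-system} for $(G,\bgamma,\vec d^{\perp})$. Because the direction assigned to edge $ij$ is $\vec d^{\perp}_{ij}$, the defining requirement that the symmetrized edge vector $\Phi(\gamma_{ij})\vec q_j-\vec q_i$ be parallel to the prescribed direction $\vec d^{\perp}_{ij}$ is, by the identity $\vec u\parallel \vec d^{\perp}\Leftrightarrow \iprod{\vec u}{\vec d}=0$, equivalent to
\[
\iprod{\Phi(\gamma_{ij})\vec q_j-\vec q_i}{\vec d_{ij}}=0.
\]

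Substituting $\vec d_{ij}=\Phi(\gamma_{ij})\vec p_j-\vec p_i$ and using the symmetry of the inner product, this becomes
\[
\iprod{\Phi(\gamma_{ij})\vec p_j-\vec p_i}{\Phi(\gamma_{ij})\vec q_j-\vec q_i}=0,
\]
which is identical to the rigidity row under the relabeling $\vec v_i\leftrightarrow \vec q_i$. Hence the two $m\times 2n$ coefficient matrices are the same, and their ranks agree.

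The only subtlety worth flagging is the convention in the direction network equation: the perpendicular enters because labelling a direction by $\vec d^{\perp}$ amounts to testing perpendicularity against $\vec d$ in the defining equation, so it is $\vec d_{ij}$ rather than $\vec d^{\perp}_{ij}$ that appears on the right-hand side after unpacking; once this is pinned down, the rest is algebraic bookkeeping.
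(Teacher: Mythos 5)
Your proof is correct and is essentially the paper's argument made explicit: the paper's entire proof is the one-line remark that one should exchange the roles of $\vec v_i$ and $\vec p_i$ in \eqref{colored-inf}, which is exactly the row-by-row identification of the two coefficient matrices that you carry out. The subtlety you flag --- that assigning the direction $\vec d^{\perp}_{ij}$ means the constraint tests perpendicularity against $\vec d_{ij}$, together with writing the differential as $\iprod{\Phi(\gamma_{ij})\cdot\vec p_j - \vec p_i}{\Phi(\gamma_{ij})\cdot\vec v_j - \vec v_i}$ --- is precisely the bookkeeping the paper's terse proof elides, and you resolve it correctly.
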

\begin{proof}
Exchange the roles of $\vec v_i$ and $\vec p_i$ in \eqref{colored-inf}.
\end{proof}

\subsection{Proof of \theoref{reflection-laman}}
The, more difficult, ``Laman direction'' of the Main Theorem follows immediately from
\theoref{direction-network} and \propref{rigidity-vs-directions}:
given a reflection-Laman graph \theoref{direction-network} produces a realization with
no coincident endpoints and a certificate that \eqref{colored-inf} has corank one.
\eop

\subsection{Remarks}
The statement of \propref{rigidity-vs-directions} is \emph{exactly the same} as the analogous statement for
orientation-preserving cases of this theory.  What is different is that, for reflection frameworks,
the rank of $(G,\bgamma,\vec d^{\perp})$ is \emph{not}, the same
as that of $(G,\bgamma,\vec d)$.  By \propref{reflection-22-collapse}, the set of directions arising as
the difference vectors from point sets are \emph{always non-generic} on reflection-Laman graphs, so
we are forced to introduce the notion of a special pair as in \secref{direction-network}.

\bibliographystyle{plainnat}

\end{document}